\newcommand{\be}{\pmb e}
\newcommand{\bu}{\pmb u}
\newcommand{\beh}{\pmb f}
\newcommand{\tens}{\boldsymbol{\mathcal{S}}}
\newcommand{\sD}[1]{\boldsymbol{e}(#1)}
\newcommand{\dual}[2]{\left\langle#1,#2\right\rangle}
\newcommand{\norm}[3]{\|{#1}\|_{#2}^{#3}}
\newcommand{\ddt}{\frac{d}{dt}}
\newcommand{\dert}{\partial_t}
\renewcommand{\dh}{\mathrm{d}^h}
\renewcommand{\th}{\mathrm{\tau}^h}
\newcommand{\intom}{\int_\Omega}
\newcommand{\inthn}{\int\limits_{t_0-h}^{t_0}}
\newcommand{\finthn}{\fint\limits_{t_0-h}^{t_0}}
\newcommand{\dx}{\,dx}
\newcommand{\puniq}{p_{\rm uniq}}
\newcommand{\duniq}{\delta_{\rm uniq}}
\newcommand{\suniq}{\sigma_{\rm uniq}}
\newcommand{\buh}{\overline{\bu}_h}
\newcommand{\diver}{\operatorname{div}}
\newcommand{\rr}{\mathbb{R}}
\newcommand{\nn}{\mathbb{N}}
\newcommand{\dashint}{\fint}
\newcommand{\sleb}{semi-Lebesgue}
\newcommand{\bU}{{\pmb U}}
\newtheorem{lemma}{Lemma}[section]
\newtheorem{theorem}{Theorem}[section]
\newtheorem{definition}{Definition}[section]
\newtheorem{corollary}{Corollary}[section]
\newtheorem{remark}{Remark}[section]
\numberwithin{equation}{section}
\begin{document}

\title[Time regularity of non-Newtonian fluid]{Time regularity of flows of non-Newtonian fluids with critical power-law growth}
%\thanks{M.~Bul\'{\i}\v{c}ek is thankful  to the project project MORE (ERC-CZ  project no. LL1202) financed by the Ministry of Education, Youth and Sports, Czech Republic. M.~Bul\'{\i}\v{c}ek is a member of the Ne\v{c}as Center for Mathematical Modeling.}

%\date{}

\author[M.~Bul\'{\i}\v{c}ek]{Miroslav Bul\'{\i}\v{c}ek}
\address{Charles University, Faculty of Mathematics and Physics, Mathematical Institute, Soko\-lov\-sk\'{a}~83, 186~75~Prague~8, Czech~Republic}
\email{mbul8060@karlin.mff.cuni.cz}

\author[P.~Kaplick\'{y}]{Petr Kaplick\'{y}}
\address{Charles University, Faculty of Mathematics and Physics, Department of Mathematical Analysis, Sokolovsk\'{a}~83, 186~75~Prague~8, Czech~Republic}
\email{kaplicky@karlin.mff.cuni.cz}

\author[D.~Pra\v{z}\'{a}k]{Dalibor Pra\v{z}\'{a}k}
\address{Charles University, Faculty of Mathematics and Physics, Department of Mathematical Analysis, Sokolovsk\'{a}~83, 186~75~Prague~8, Czech~Republic}
\email{prazak@karlin.mff.cuni.cz}

%\footnotetext{
%\verbatiminput{./CVS/Entries}|% | grep bkp}
%}

\keywords{non-Newtonian fluids, time regularity, uniqueness}
\subjclass[2000]{76A05, 76D03}

\thanks{M.B. was  supported by the Czech Science Foundation (Grant no. 16-03230S). M.B. and P.K. acknowledge membership in the Nečas Center of Mathematical Modeling (http://ncmm.karlin.mff.cuni.cz).}

\begin{abstract}We deal with the flows of non-Newtonian fluids in three dimensional setting subjected to the homogeneous Dirichlet boundary condition. Under the natural monotonicity, coercivity and growth condition on the Cauchy stress tensor expressed by a power index $p\ge 11/5$ we establish regularity properties of a solution with respect to time variable. Consequently, we can use this better information for showing the uniqueness of the solution provided that the initial data are good enough for all power--law indexes $p\ge 11/5$. Such a result was available for $p\ge 12/5$ and therefore the paper fills the gap and extends the uniqueness result to the whole range of $p$'s for which the energy equality holds.
\end{abstract}

\maketitle

\section{Introduction}

We study the generalized Navier--Stokes system
\begin{align} \label{i1}
\dert \bu + (\bu \cdot \nabla) \bu
- \diver \tens \big( \sD{\bu} \big) + \nabla \pi
	&= \beh
\\ \label{i2}
\diver \bu &= 0
\end{align}
in $Q:=(0,T) \times \Omega$ with a bounded domain $\Omega \subset \rr^3$. Here $\bu: Q\to \rr^3$ denotes the velocity field, $\beh:Q \to \rr^3$ the density of the external body forces, $\pi:Q\to \rr$ is the pressure and $\tens: \rr^{3\times 3} \to \rr^{3\times 3}$ denotes the viscous part of the Cauchy stress. The system~\eqref{i1}--\eqref{i2} is completed by the initial  and
boundary conditions
\begin{align*}
    \bu &= \pmb{0} &&\textrm{on $\partial \Omega\times (0,T)$}
\\
\bu(0) &= \bu_0 &&\textrm{in $\Omega$},%\label{ic}
\end{align*}
We consider the usual Ladyzhenskaya-type power-law fluid introduced in \cite{La69}, i.e.,
existence of $p>2$ such that the stress tensor is a continuous nonlinear function of the symmetric
    velocity gradient~$\sD{\bu}$, satisfying for all symmetric $\be, \be_1, \be_2 \in \rr^{3 \times 3}$
\begin{equation} 	\label{i5-aS}
\begin{aligned}
    \big( \tens(\be_1) - \tens(\be_2) \big)
    : \big( \be_1 - \be_2 \big)
    &\ge
\begin{cases}
    c \big( 1 + |\be_1| + |\be_2|\big)^{p-2} |\be_1 - \be_2|^2,
    \\
    c |\be_1 - \be_2|^2 + c |\be_1 - \be_2|^p,
\end{cases}
%\tens \big( \sD{\bu} \big) = \nu_0 ( 1 + \nu_1 |\sD{\bu}|^{p-2}) \sD{\bu}
\end{aligned}
\end{equation}
 and
\begin{equation}\label{A2}
    \big| \tens(\be)\big|\leq c \big( 1 + |\be|^{p-1}).
\end{equation}

Without going into details, the main result of the
paper can be explained as follows: the system \eqref{i1}--\eqref{i2} gives
natural a priori estimates $\bu \in L^{\infty}(0,T;L^2) \cap L^p(0,T;V_p)$.
If $p\ge 11/5$, it further follows that $\dert \bu \in
L^{p'}(0,T;V_p')$. This means that the solution becomes an
admissible test function and rigorous existence theorem can be obtained using
standard compactness and monotonicity arguments. The question of
uniqueness is however open in general in the above regularity class.

This ``existence-uniqueness gap'' is related to the fact that it is possible
to test the equation by the solution, but it is not possible to do the same
for the \emph{differences} of solutions, in view of the
nonlinear character of the problem. The key idea (due to \cite{Bekp10})
is that in the strictly subcritical case, i.e.,
\begin{equation*}%	\label{c1152}
    p > \frac{11}5,
\end{equation*}
one obtains some room to estimate at least fractional differences
of solutions. Of course the critical term here is the convective term,
and due to its polynomial character,
the estimates are easily computable and can be iteratively improved to the
point where the obtained regularity finally implies uniqueness.

\par
In the present paper, we extend the result in two ways. We show that the
regularity is global, meaning up to the time $t=0$, provided that
$\bu_0 \in W^{1,p}$. Secondly, we show that
the result holds even for the critical case $p=11/5$. Here the key
ingredient is a delicate estimate involving a vector-valued version
of Gehring's lemma.

\par
The paper is organized as follows. In Section~\ref{s:FSWS}, we recall the
appropriate function spaces, including a brief review of Nikolskii
spaces. The concept of weak solution is defined, and the main result of
the paper is formulated together with its corollaries.
In Section~\ref{s:AUX}, we establish some auxiliary estimates:
we recall the standard weak-strong uniqueness, establish the
initial-time regularity, and also prove the improved integrability
in the critical case $p=11/5$ based on Gehring's' lemma.

\par
The iterative scheme, which results in the proof of the main theorem, is
explained in the final Section~\ref{s:PRF}.

%%%%%%%%%%%%%%%%%%%%%%%%%%%%%%%%%%%%%%%%%%%%%%%%%%%%%%%%%%%%

\section{Bibliographical overview}%	\label{s:BIB}

Although improving regularity in time of weak solutions is standard, see among others \cite[Theorem~III.3.5]{Temam2001}, \cite[Theorem~2.7.2]{Sohr2001}, \cite[Section~7.1]{Evans1998}, \cite[Lemma~4]{GiaGiu1973} there are not many works where this is done in the similar way as here. Perhaps the closest to our approach is the method from \cite[Section~2]{NWW1998}, where the iterative improvement of time regularity of solutions is necessary due to terms appearing when localizing equations in time. In our article the main obstacle is the convective term. The method we use is based on the same idea of iterative improvements of time regularity as the method of \cite{NWW1998} although the application is slightly different. It allows to handle problems connected with convective term and also with localization. In the case $p\in(11/5,12/5]$ there appear additional difficulties that have to be overcome. In \cite{Kap2005} this method is used to obtain full regularity of systems similar to \eqref{i1} for $p\in [2, 4)$ if $\Omega\subset\rr^2$.

Improving time regularity of weak solutions is also used in \cite{BEKP2007} to compute bounds of dimension of attractor to system \eqref{i1} if $\tens$ has potential and $p>12/5$. %From this perspective, in spite of the fact that the main interest of the present article are systems with non potential $\tens$, the results are also new for $\tens$ with potential and $p\in (11/5,12/5]$. However, since the regularity of time derivative of $v$ is obtained in several steps we do not compute the bounds for dimension of the attractor explicitly.

  In \cite{BK2016} a similar iterative approach was used to derive local improvement of regularity in time for strongly nondiagonal parabolic systems of p-Laplace type. The main obstacle there is not a nonlinear term similar to $K_0$ below but the terms appearing due to localization in space.  Technique of differences in time is used also in \cite{FS2015}.
  In \cite{GP2014} and \cite{FGP1} a similar approach is used to establish
time regularity and uniqueness for the Ladyzhenskaya type fluid coupled
with Cahn-Hilliard equation.

Concerning the uniqueness of solution -- already in \cite{La69}, the uniqueness is established provided $p\ge 5/2$ or in case of smooth initial condition for $p\ge 12/5$. The range $p\in [11/5, 12/5)$ however remained untouched except the case of spatial periodic condition, for which one can improve even spatial regularity, see \cite{La69,MNRR96}. Such a method is however not available for Dirichlet (or other) boundary conditions. The case of general boundary condition was firstly treated in \cite{Bekp10}, where the uniqueness in sense of trajectories\footnote{Here, in the sense of trajectories means that if $\bu_1$ and $\bu_2$ coincide for all $t\in (0,t^*)$ then they coincide also for all $t\ge t^*$.} was proven for $p>11/5$. This paper therefore completes and unifies  the uniqueness theory, i.e., for sufficiently regular initial condition, we have the global in time unique solution provided $p\ge 11/5$.

%%%%%%%%%%%%%%%%%%%%%%%%%%%%%%%%%%%%%%%%%%%%%%%%%%%%%%%%%%%%

\section{Preliminaries}	\label{s:FSWS}
\subsection{Function spaces}
We employ the standard Lebesgue and Sobolev spaces,
pertinent to the weak formulation of our problem:
\begin{align*}
G &= L^2(\Omega;\rr^3) \cap \lbrace  \diver \bu = 0,\ \bu\cdot n
|_{\partial \Omega} = 0 \rbrace
	\\
V_p &= W^{1,p}(\Omega;\rr^3) \cap \lbrace  \diver \bu = 0,\
\bu |_{\partial \Omega} = 0 \rbrace
\end{align*}
Our main focus will be the time regularity of vector-valued
function $u:[0,T] \to X$, where $X$ is some Banach space.
The symbol $\ddt$ denotes the weak (distributional) derivative,
and $C$, $C^{0,\alpha}$ are continuous and $\alpha$-H\"older continuous
functions, respectively. To describe a finer scale of fractional time
regularity, we will work with the so-called Nikolskii spaces. For
$u:I \to X$, where $I \subset \rr$ is an arbitrary time interval,
and $h>0$, we set
\begin{align*}
    I_h &= \lbrace t \in I;\ t+h \in I \rbrace
    	\\
    \tau^h u(t) &= u(t+h), \quad t \in I_h
	\\
    d^h u(t) &= u(t+h) - u(t), \quad t \in I_h
\end{align*}
For $p\in[1,\infty]$ and $s\in(0,1)$, the Nikolskii space
$N^{s,p}(I;X)$ is defined via the norm
\begin{equation*}
    \norm{u}{L^p(I;X)}{} + \sup_{h>0} h^{-s} \norm{d^hu}{L^p(I_h;X)}{}
\end{equation*}
It is not difficult to see that for $s=1$, the above norm is equivalent
to $W^{1,p}(I;X)$. For a general $\sigma = k + s$, where $k\in \nn$ and
$s \in (0,1)$,
one defines $N^{\sigma,p}(I;X)$ as the space of functions with $(\ddt)^j u \in
L^p(I,X)$ for $j=0,\dots,k$ and moreover, $(\ddt)^k u \in N^{s,p}(I;X)$.

Nikolskii spaces are one instance of fractional regularity spaces:
$N^{s,p} = B^{s,p}_{\infty}$, where the latter is the Besov space.
The corresponding theory is treated in many books, e.g.\ Adams, Fournier
\cite{AF03} or Bennett, Sharpley \cite{BS88}. Relatively elementary
treatment can be found in Simon \cite{Si90}. The following embeddings are
standard, see e.g.\ \cite[Corollary 26 and 33]{Si90}.
\begin{align}	\nonumber%\label{nimb1}
    N^{s,p}(I;X) &\hookrightarrow C^{0,\alpha}(I;X)
    \qquad \textrm{if $\alpha = s - \frac1p > 0 $}
    \\		\label{nimb2}
    N^{s,p}(I;X) &\hookrightarrow L^q(I;X)
    \qquad \textrm{if $\frac1q > \frac1p - s \ge 0$}
\end{align}
Nikolskii spaces are not the best choice in view of interpolation or
embedding results; note the strict condition on $q$ in \eqref{nimb2}. Their
relative advantage lies in simplicity of definition -- we will see that it is
rather straightforward to obtain estimates of $N^{s,p}$-norm. The following
special interpolation result will be useful (see Lemma~2.3 in
\cite{Bekp10} for a simple proof).

\begin{lemma}	\label{lm23}
Let $X \hookrightarrow H$, where $H$ is a Hilbert space and
$X$ is separable and dense in $H$. Then
\begin{equation*}
    N^{\alpha,p}(I;X) \cap N^{\beta,p'}(I;X')
    \hookrightarrow N^{\frac{\alpha+\beta}2,2}(I;H)
\end{equation*}
for any $\alpha$, $\beta\ge0$.
\end{lemma}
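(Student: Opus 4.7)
The key structural observation is that the hypotheses equip us with a Gelfand triple $X \hookrightarrow H \cong H' \hookrightarrow X'$: Riesz representation identifies $H$ with $H'$, and the density of $X$ in $H$ yields the continuous embedding $H' \hookrightarrow X'$. For any $v \in X$ this gives the pointwise duality identity $\|v\|_H^2 = \dual{v}{v}$, with the bracket interpreted as the pairing between $X'$ and $X$, and hence the fundamental inequality
\begin{equation*}
\|v\|_H^2 \le \|v\|_X \, \|v\|_{X'}.
\end{equation*}

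First I would apply this pointwise bound to $v = d^h u(t)$ for each $t \in I_h$, integrate over $I_h$, and use H\"older's inequality in time with exponents $p$ and $p'$:
\begin{equation*}
\|d^h u\|_{L^2(I_h;H)}^2
\le \int_{I_h}  \|d^h u(t)\|_X \, \|d^h u(t)\|_{X'} \, dt
\le \|d^h u\|_{L^p(I_h;X)} \, \|d^h u\|_{L^{p'}(I_h;X')}.
\end{equation*}
By the very definitions of the Nikolskii norms on the right the product is bounded by a constant times $h^{\alpha} \cdot h^{\beta} = h^{\alpha+\beta}$, whence $h^{-(\alpha+\beta)/2} \|d^h u\|_{L^2(I_h;H)} \le C$. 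This is precisely the difference-quotient estimate needed for membership in $N^{(\alpha+\beta)/2,2}(I;H)$. The control of the base $L^2(I;H)$-norm is obtained by the same duality applied to $u(t)$ itself in place of $d^h u(t)$, giving $\|u\|_{L^2(I;H)}^2 \le \|u\|_{L^p(I;X)} \|u\|_{L^{p'}(I;X')}$.

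This already settles the result whenever $(\alpha+\beta)/2 \in [0,1)$, in particular in the regime $\alpha, \beta \in [0,1]$ relevant for the applications in this paper. For larger values of $\alpha$ or $\beta$ one reduces to the above by writing $\alpha = k + \bar\alpha$, $\beta = l + \bar\beta$ with fractional parts in $[0,1)$ and applying the same duality/H\"older argument to the distributional time derivatives $(\ddt)^j u$, which by definition of higher-order Nikolskii spaces inherit the corresponding fractional regularity in $X$ and $X'$. \emph{Main obstacle.} There is essentially nothing substantial to overcome: the proof is one line of Gelfand-triple duality followed by H\"older's inequality in $t$. The only technical points are the measurability of $t \mapsto \dual{d^h u(t)}{d^h u(t)}$, which follows from separability of $X$ via Pettis' theorem, and the density hypothesis, which is exactly what is needed to make the embedding $H \hookrightarrow X'$ continuous and the duality identity meaningful.
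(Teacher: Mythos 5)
Your core argument is exactly the standard one — and almost certainly the one in the cited reference \cite{Bekp10} (the present paper only cites Lemma~2.3 there; it gives no proof of its own). The Gelfand-triple identity $\|v\|_H^2=\dual{v}{v}\le\|v\|_{X'}\|v\|_X$, applied pointwise to $d^hu(t)$ and then to $u(t)$, followed by H\"older in $t$ with exponents $p,p'$, is correct and immediately gives $\|d^hu\|_{L^2(I_h;H)}\lesssim h^{(\alpha+\beta)/2}$ whenever $\alpha,\beta\in[0,1]$; the measurability remark via Pettis is fine.

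Your final sentence about the reduction to large $\alpha,\beta$ is, however, too glib and as stated does not work. When the integer parts $k$ and $l$ of $\alpha$ and $\beta$ differ (say $k<l$), you cannot simply ``apply the same argument to $(\ddt)^j u$'': for $j$ strictly between $k$ and $l$ one has $(\ddt)^ju\in L^{p'}(I;X')$ by hypothesis, but no bound on $(\ddt)^ju$ in $X$ is available, so the duality/H\"older step has nothing on one side of the pairing. Equally importantly, the first-difference bound $\|d^hu\|_{L^p(I_h;X)}$ saturates at order $h^1$: if $\alpha>1$, membership in $N^{\alpha,p}(I;X)$ only improves the regularity of $(\ddt)^{\lfloor\alpha\rfloor}u$, not the H\"older rate of $d^hu$ itself, so the naive difference-quotient estimate yields $h^{(\min(\alpha,1)+\min(\beta,1))/2}$, which is strictly weaker than the claim when $\alpha>1>\beta$, for instance. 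A correct general reduction must first differentiate to the level of the smaller integer part and then interpolate, and whether this actually recovers the full exponent $(\alpha+\beta)/2$ (or whether the lemma is in practice only invoked with $\alpha,\beta\le1$, which suffices for every application in this paper) deserves an honest caveat rather than a one-line dismissal.
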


%%%%%%%%%%%%%%%%%%%%%%%%%%%%%%%%%%%%%%%%%%%%%%%%%%%%%%%%%%%%
%
%\section{Weak formulation. The main result}
\subsection{Weak formulation and classical results}
We adopt the standard functional
formulation of \eqref{i1}--\eqref{i2}. Set
\begin{align*}
\dual{N(\bu)}{\psi} &= \intom \tens \big(\sD{\bu}\big) : \sD{\psi} \dx
	\\
\dual{K_0(\bu)}{\psi} &= \intom \big( \bu \otimes \bu \big) : \nabla \psi
\dx
\end{align*}
A function $\bu:[0,T] \to V_p$
will be called \emph{weak solution} if it satisfies
\begin{align} 	\label{rw1}
\bu &\in L^{\infty}(0,T;G) \cap L^p(0,T;V_p)
\end{align}
and the equation
\begin{align} 	\label{wf1}
\ddt \bu + N(\bu) = K_0(\bu) +  \beh
	\qquad \textrm{in $V_p'$}
\end{align}
holds almost everywhere in $I$.

The pressure is excluded from the weak formulation as usual. The
critical condition~$p\ge 11/5$ means that the derivative
belongs to the corresponding dual space
\begin{align}	\label{rdt1}
	\ddt \bu &\in L^{p'}(0,T;V_p').
\end{align}
More precisely, one has the following estimate.
\begin{lemma}	\label{lm:dual1}
Let $p \ge 11/5$ and $\tens$ satisfy \eqref{A2}. Then the weak solution satisfies for almost every $t\in (0,T)$
\begin{align}	\label{Ph1}
\norm{\ddt \bu(t)}{V_p'}{} &\le C\big( 1 +\norm{\bu(t)}{V_p}{p-1}+\norm{\beh(t)}{V'_p}{} \big)
\end{align}
where $C$ possibly depends on the (essentially bounded) function
$\norm{\bu(t)}{2}{}$. %and  a possibly unbounded function $\norm{h(t)}{V_p'}{}$.
\end{lemma}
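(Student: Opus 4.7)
The starting point is the weak formulation~\eqref{wf1}, which rearranges to
\begin{equation*}
\partial_t \bu = -N(\bu) + K_0(\bu) + \beh \qquad \text{in } V_p',
\end{equation*}
so it suffices to bound each term on the right in the $V_p'$-norm. The diffusion and forcing terms are routine; the whole point of the restriction $p\ge 11/5$ lies in the convective term $K_0$, which will be the main obstacle.

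For $N(\bu)$ I would test against $\psi\in V_p$ and apply the growth condition~\eqref{A2} followed by H\"older's inequality in space:
\begin{equation*}
|\dual{N(\bu)}{\psi}| \le \intom \bigl(1+|\sD{\bu}|^{p-1}\bigr)|\sD{\psi}|\dx
\le C\bigl(1+\norm{\bu}{V_p}{p-1}\bigr)\norm{\psi}{V_p}{},
\end{equation*}
which gives the $N$-contribution. The $\beh$ term contributes directly.

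The delicate estimate is for $K_0$. By duality $|\dual{K_0(\bu)}{\psi}|\le \|\bu\otimes\bu\|_{p'}\|\nabla\psi\|_p = \|\bu\|_{2p'}^{2}\,\norm{\psi}{V_p}{}$, so I need to control $\|\bu\|_{2p'}^2$ by the $V_p$-norm up to a uniform factor depending on $\|\bu\|_2$. To this end I interpolate $L^{2p'}$ between $L^2$ and the Sobolev target $L^{p^*}$ with $p^*=3p/(3-p)$ (the case $p\ge3$ is easier, as then $2p'\le 3$ and the embedding is direct). Setting $\|\bu\|_{2p'}\le \|\bu\|_2^\theta\|\bu\|_{p^*}^{1-\theta}$, solving $\tfrac{1}{2p'}=\tfrac\theta2+\tfrac{1-\theta}{p^*}$ gives
\begin{equation*}
\theta=\frac{5p-9}{5p-6},\qquad 2(1-\theta)=\frac{6}{5p-6},
\end{equation*}
and $\theta\in[0,1]$ precisely when $p\ge 9/5$, which is satisfied. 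Combining with the Sobolev embedding $V_p\hookrightarrow L^{p^*}$ and the essential boundedness of $\|\bu(t)\|_2$,
\begin{equation*}
\|K_0(\bu)\|_{V_p'}\le C\,\norm{\bu}{V_p}{6/(5p-6)}.
\end{equation*}

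The critical observation is that the exponent $6/(5p-6)$ equals $p-1$ exactly at $p=11/5$ and is strictly smaller for $p>11/5$; this is the arithmetic reason that $11/5$ is the threshold. Consequently, by Young's inequality, $\norm{\bu}{V_p}{6/(5p-6)}\le 1+\norm{\bu}{V_p}{p-1}$ for all $p\ge 11/5$. Summing the three contributions yields~\eqref{Ph1}. The only real obstacle is getting the interpolation exponents right; everything else is standard duality and embedding.
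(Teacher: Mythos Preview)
Your proof is correct and follows essentially the same route as the paper: decompose $\partial_t\bu$ via~\eqref{wf1}, bound $N(\bu)$ by the growth condition~\eqref{A2}, and control $K_0(\bu)$ by estimating $\|\bu\|_{2p'}^2$ through the Gagliardo--Nirenberg interpolation between $L^2$ and $W^{1,p}$, yielding the exponent $6/(5p-6)\le p-1$ precisely for $p\ge 11/5$. The paper writes the interpolation~\eqref{ip0} directly as $\|v\|_{2p'}\le \|v\|_2^{1-a}\|v\|_{1,p}^a$ with $a=3/(5p-6)$, which is your $1-\theta$; your detour through $L^{p^*}$ and the Sobolev embedding is equivalent.
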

\begin{proof}
Omitting the variable $t$ for simplicity, we take $\psi \in V_p$
with $\norm{\psi}{}{} \le 1$ in \eqref{wf1} and estimate
\[
\dual{\ddt \bu}{\psi} = - \dual{N(\bu)}{\psi} +
\dual{K_0(\bu)}{\psi} +\dual{\beh}{\psi}= D_1 + D_2+D_3.
\]
Clearly, it follows from \eqref{A2} that  $|D_1| +|D_3|\le C ( 1 + \norm{\bu}{V_p}{p-1}+\norm{\beh}{V_p'}{})$. Using the
interpolation (recall $\Omega\subset \rr^3$)
\begin{equation}	\label{ip0}
\norm{v}{2p'}{} \le \norm{v}{2}{1-a} \norm{v}{1,p}{a}
\qquad a = \frac{3}{5p-6},\ 1-a = \frac{5p-9}{5p-6},
\end{equation}
which is valid all $p\ge 9/5$, we have
\begin{align*}
|D_2| 	\le \intom |\bu|^2 |\nabla \psi| \dx
	\le \norm{\bu}{2p'}{2} \norm{\nabla \psi}{p}{}
	\le \norm{\bu}{2}{2(1-a)} \norm{\bu}{V_p}{2a}
\norm{\psi}{V_p}{} \le C \big( 1 + \norm{\bu}{V_p}{p-1} \big).
\end{align*}
Here we have used the Young inequality and the fact that $2a \le p - 1$, which is just $p\ge 11/5$.
\end{proof}

Next, due to the monotonicity of $\tens$ and assumption on $p$, we have the following existence result.
\begin{lemma}%	\label{th:ex}
Let $p\ge 11/5$, $\beh \in L^{p'}(0,T;V_p')$ and $\bu_0 \in G$.  Then there
exists at least one weak solution within the class \eqref{rw1}, \eqref{rdt1},
satisfying $\bu(0)=\bu_0$.
\end{lemma}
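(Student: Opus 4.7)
The plan is to construct the solution by a standard Galerkin approximation in $V_p$, obtaining compactness in time from the uniform dual estimate provided by Lemma~\ref{lm:dual1}, and identifying the weak limit of the nonlinear stress by means of the monotonicity assumption~\eqref{i5-aS}.

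First I would fix a basis $\lbrace \bw_k \rbrace_{k\in \nn}$ of $V_p$ which is orthonormal in $G$, set $V^n = \mathrm{span}\lbrace \bw_1, \dots, \bw_n \rbrace$, and seek $\bu^n(t) = \sum_{k=1}^n c_k^n(t)\bw_k$ satisfying
\begin{equation*}
\dual{\dert \bu^n}{\bw_k} + \dual{N(\bu^n)}{\bw_k} = \dual{K_0(\bu^n)}{\bw_k} + \dual{\beh}{\bw_k}, \qquad k=1,\dots,n,
\end{equation*}
with $\bu^n(0)$ the $G$-orthogonal projection of $\bu_0$ onto $V^n$. Carath\'eodory theory provides a local solution. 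Testing by $\bu^n$ itself makes the convective term vanish, and then the coercivity in \eqref{i5-aS} together with Young's inequality gives a uniform energy estimate implying $\bu^n \in L^\infty(0,T;G) \cap L^p(0,T;V_p)$ and hence global existence on $[0,T]$.

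Since the proof of Lemma~\ref{lm:dual1} uses only the equation, the growth \eqref{A2}, and an $L^\infty(0,T;G)$ bound, applying it to the Galerkin approximations yields a uniform bound of $\dert \bu^n$ in $L^{p'}(0,T;V_p')$; it is precisely here that $p \ge 11/5$ is essential. The Aubin--Lions lemma applied to $V_p \hookrightarrow\hookrightarrow G \hookrightarrow V_p'$ then extracts a subsequence (not relabelled) so that $\bu^n \to \bu$ strongly in $L^p(0,T;G)$, weakly in $L^p(0,T;V_p)$, with $\dert \bu^n \rightharpoonup \dert \bu$ in $L^{p'}(0,T;V_p')$, and (using \eqref{A2}) $\tens(\sD{\bu^n}) \rightharpoonup \chi$ in $L^{p'}(Q;\rr^{3\times 3})$. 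The linear terms pass to the limit directly, and the convective term passes by combining the strong $L^p(0,T;L^2)$ convergence of $\bu^n$ with H\"older's inequality and \eqref{ip0}, so that $\bu$ satisfies \eqref{wf1} with $N(\bu)$ replaced by $\chi$.

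The main obstacle, as usual, is to identify $\chi = \tens(\sD{\bu})$. I would use the Minty--Browder monotonicity trick: testing the Galerkin equation by $\bu^n$ itself and integrating in time yields the energy identity for the approximants, from which, after passing to the limit (using the strong convergence of $\bu^n(t)$ for a.e.\ $t$ in $G$, weak lower semicontinuity of the $L^2$-norm, and the convergence of the forcing and convective terms), one obtains
\begin{equation*}
\limsup_{n\to\infty} \int_0^T \dual{N(\bu^n)}{\bu^n}\,dt \le \int_0^T \dual{\chi}{\sD{\bu}}\,dx\,dt.
\end{equation*}
Combined with the monotonicity inequality $\dual{\tens(\sD{\bu^n}) - \tens(\sD{\bv})}{\sD{\bu^n} - \sD{\bv}} \ge 0$ for arbitrary $\bv \in L^p(0,T;V_p)$, passing to the limit in $n$ and then choosing $\bv = \bu - \lambda \bw$ with $\lambda \to 0^+$ (Minty's trick), one concludes $\chi = \tens(\sD{\bu})$. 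Finally, $\bu \in L^\infty(0,T;G)$ together with $\dert \bu \in L^{p'}(0,T;V_p')$ implies $\bu \in C_{\mathrm{weak}}([0,T];G)$, so that the initial condition $\bu(0)=\bu_0$ is attained in the weak sense in $G$, completing the construction.
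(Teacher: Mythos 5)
Your proposal fleshes out, via a Galerkin scheme plus Aubin--Lions compactness and the Minty--Browder monotonicity trick, exactly the argument that the paper sketches and delegates to \cite[Chapter~5]{MNRR96}; the a priori estimates you derive (energy identity from testing by $\bu^n$, the dual bound on $\dert\bu^n$ from the argument of Lemma~\ref{lm:dual1}) are the same ones the paper records before referring out. One technical point you gloss over: with a basis merely ``orthonormal in $G$'', the Galerkin equation controls $\dual{\dert\bu^n}{\psi}$ only through the $G$-projection $P_n\psi$, so the uniform $L^{p'}(0,T;V_p')$ bound on $\dert\bu^n$ requires the $P_n$ to be uniformly bounded on $V_p$ (hence a basis adapted to a stronger norm, e.g.\ eigenfunctions of a higher-order Stokes-type operator, as in \cite{MNRR96}) or, alternatively, an Aubin--Lions--Simon variant using a weaker bound on the time derivative.
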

\begin{proof}

Let us outline the formal a priori estimates.
Apply \eqref{wf1} to $\bu$.
Thanks to \eqref{i2} and the fact that $\bu$ vanishes on $\partial \Omega$, the convective term (the first term on the right hand side of \eqref{wf1}) disappears and we obtain the energy identity
\begin{equation}	\label{ei}
    \frac12\ddt \norm{\bu}{2}{2}
    + \intom \tens(\sD{\bu}) : \sD{\bu} \,dx
    =\dual{\beh}{\bu}.
\end{equation}
In view of \eqref{i5-aS}, Korn's and Poincar\'e's inequalities, we have
\begin{equation}\label{A1}
\begin{aligned}
    \intom \tens(\sD{\bu}) : \sD{\bu} \,dx
    &\ge c \big( \norm{\bu}{1,2}{2} + \norm{\bu}{1,p}{p} \big)
    \\
    \dual{\beh}{\bu}
    & \le \epsilon \norm{\bu}{1,p}{p} + C_\epsilon
    \norm{\beh}{V_p'}{p'}
\end{aligned}
\end{equation}
whence the estimate \eqref{rw1} follows easily. Secondly, by
Lemma~\ref{lm:dual1}, one has \eqref{rdt1}, and $\bu$ is indeed an
admissible test function. With a suitable approximating scheme,
the above circle of reasoning can be turned into a rigorous existence theorem, employing the usual compactness and monotonicity argument to pass to the limit in nonlinear term $K_0(\bu)$.
We omit further details, referring e.g. to \cite[Chapter 5]{MNRR96}.
\par
Note that it also follows that $\bu$ has a continuous representative in $C([0,T];G)$ and the initial condition $\bu(0)=\bu_0$ makes sense.
\end{proof}

The last classical result, we recall here (see e.g. \cite{MNRR96}), is the ``fundamental'' difference inequality that can be further used for proving the weak-strong uniqueness result.
\begin{lemma}	\label{lm:diff}
Let $\bu_1$, $\bu_2$ be weak solutions corresponding to right hand side functions $\beh_1$ and $\beh_2$ respectively and let $p\ge 11/5$. Let us define
\begin{equation} 	\label{p:uniq}
\puniq := \frac{2p}{2p-3}.
\end{equation}
Then
\begin{equation}\label{eq:unique}
\begin{split}
&\ddt  \norm{\bu_1 - \bu_2}{2}{2}
+
c \big( \norm{\bu_1 - \bu_2}{V_2}{2} + \norm{\bu_1 -
\bu_2}{V_p}{p} \big)
\\
&\qquad \le C\norm{\bu_2}{V_p}{\puniq} \norm{\bu_1 - \bu_2}{2}{2}
+C\norm{\beh_1-\beh_2}{V_p'}{p'},
\end{split}
\end{equation}
where the constant $C$ depends only on $\Omega$ and $p$.
\end{lemma}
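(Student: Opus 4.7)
The plan is to subtract the weak formulations~\eqref{wf1} for $\bu_1$ and $\bu_2$ and to test the resulting equation with $\bw := \bu_1 - \bu_2$. Admissibility is ensured by Lemma~\ref{lm:dual1}, which gives $\ddt \bu_i \in L^{p'}(0,T;V_p')$, so together with $\bu_i \in L^p(0,T;V_p)$ the standard Lions-type duality identity yields $\dual{\ddt \bw}{\bw} = \tfrac{1}{2}\ddt \norm{\bw}{2}{2}$ almost everywhere in $(0,T)$. The monotone stress contribution produces the full dissipation on the left-hand side of \eqref{eq:unique}: by the second form of assumption~\eqref{i5-aS}, combined with Korn's and Poincar\'e's inequalities, one has
\[
\intom \bigl(\tens(\sD{\bu_1})-\tens(\sD{\bu_2})\bigr):\sD{\bw}\dx \ge c\bigl(\norm{\bw}{V_2}{2} + \norm{\bw}{V_p}{p}\bigr).
\]
The body-force term is disposed of by H\"older and Young, $|\dual{\beh_1 - \beh_2}{\bw}| \le \epsilon\norm{\bw}{V_p}{p} + C_\epsilon \norm{\beh_1-\beh_2}{V_p'}{p'}$, with the $\epsilon$-part reserved for absorption.

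The genuine work is in the convective term $\dual{K_0(\bu_1) - K_0(\bu_2)}{\bw}$. Using the algebraic identity $\bu_1 \otimes \bu_1 - \bu_2 \otimes \bu_2 = \bw\otimes\bu_1 + \bu_2 \otimes \bw$, I observe that the first summand vanishes, since $\intom (\bw \otimes \bu_1):\nabla\bw\dx = \intom \bu_1 \cdot \nabla\bigl(|\bw|^2/2\bigr)\dx = 0$ by $\diver \bu_1 = 0$ and the no-slip condition on $\bw$. For the surviving piece I apply H\"older's inequality with the triple $\bigl(3p/(3-p),\ 6p/(5p-6),\ 2\bigr)$, placing $\bu_2$ in the Sobolev-maximal space $V_p \hookrightarrow L^{3p/(3-p)}$ and interpolating $\bw$ between $L^2$ and $L^6 \supset V_2$ with parameter $\gamma := (2p-3)/p \in (0,1)$; this yields
\[
\Bigl|\intom (\bu_2 \otimes \bw):\nabla\bw\dx\Bigr| \le C\,\|\bu_2\|_{V_p}\,\norm{\bw}{2}{\gamma}\,\norm{\bw}{V_2}{2-\gamma}.
\]
A Young inequality with conjugate pair $\bigl(2/(2-\gamma),\ 2/\gamma\bigr)$ converts the right-hand side into $\epsilon\norm{\bw}{V_2}{2} + C_\epsilon\|\bu_2\|_{V_p}^{2/\gamma}\norm{\bw}{2}{2}$, and the elementary arithmetic $2/\gamma = 2p/(2p-3) = \puniq$ gives precisely the form required by \eqref{eq:unique}.

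Collecting all contributions and absorbing the two $\epsilon$-terms into the monotone dissipation delivers \eqref{eq:unique} up to a harmless multiplicative constant. The main obstacle is really bookkeeping in the convective step: the Sobolev embedding of $V_p$ must be saturated so that the H\"older-interpolation exponent for $\bw$ is uniquely determined, and only then does the critical power $\puniq$ emerge exactly alongside $\norm{\bw}{2}{2}$; the threshold $p\ge 11/5$ is what guarantees $\gamma\in(0,1)$, hence the admissibility of the interpolation. For $p\ge 3$ the Sobolev exponent $3p/(3-p)$ is replaced by any finite $q$ and the estimate only improves, so the same scheme covers the full range $p\ge 11/5$.
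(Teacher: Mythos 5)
Your proof is correct and follows essentially the same route as the paper's: test the difference of the two equations with $\bw = \bu_1 - \bu_2$ (admissible since $p\ge 11/5$ gives $\ddt\bu_i \in L^{p'}(0,T;V_p')$), extract the full dissipation $c(\norm{\bw}{V_2}{2}+\norm{\bw}{V_p}{p})$ from the monotonicity \eqref{i5-aS}, decompose $\bu_1\otimes\bu_1 - \bu_2\otimes\bu_2 = \bw\otimes\bu_1 + \bu_2\otimes\bw$ so that the first term vanishes, interpolate $\bw$ between $L^2$ and $V_2$, and close with Young to produce $\puniq = 2/\gamma$. The only difference of substance is in how the surviving convective integral is estimated: the paper integrates by parts once more, landing $\nabla\bu_2 \in L^p$ against $\bw\otimes\bw \in L^{p'}$, whereas you keep $\bu_2$ undifferentiated and invoke the Sobolev embedding $V_p\hookrightarrow L^{3p/(3-p)}$, H\"oldering against $\bw \in L^{6p/(5p-6)}$ and $\nabla\bw \in L^2$. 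Both paths arrive at the identical intermediate bound $C\norm{\bu_2}{V_p}{}\norm{\bw}{2}{(2p-3)/p}\norm{\bw}{V_2}{3/p}$. The paper's integration-by-parts variant is marginally more robust: it only needs $2p' \le 6$, i.e.\ $p>3/2$, and covers the whole range $p\ge 11/5$ in one stroke, whereas your Sobolev embedding requires $p<3$, and your fallback ``take any large $q$'' for $p\ge3$ in fact yields the exponent $2$ (not $\puniq < 2$) on $\norm{\bu_2}{V_p}$ — still enough for Gronwall and uniqueness, but not literally \eqref{eq:unique}. One further small misattribution: $p\ge 11/5$ is what makes $\bw$ an admissible test function, not what ensures $\gamma\in(0,1)$; the latter holds already for $p>3/2$.
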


\begin{proof}
Since $p\ge 11/5$, we can use $\bu_1-\bu_2$ as a test function in \eqref{wf1} for $\bu_1$ and $\bu_2$, respectively and observe
\begin{align*}
&\frac12 \ddt \norm{\bu_1 - \bu_2}{2}{2} = \dual{\ddt ( \bu_1 -
\bu_2 ) }{\bu_1 - \bu_2}
\\
&= - \dual{N(\bu_1) - N(\bu_2)}{\bu_1 - \bu_2}
+ \dual{K_0(\bu_1) - K_0(\bu_2)}{\bu_1 - \bu_2}
+ \dual{\beh_1 - \beh_2}{\bu_1 - \bu_2}
\\
&=: D_1 + D_2 +D_3.
\end{align*}
By the $p$-ellipticity of $N(\bu)$, i.e., the assumption \eqref{i5-aS}, we have
\[
D_1 \le - c \big( \norm{\bu_1 - \bu_2}{V_2}{2} + \norm{\bu_1 -
\bu_2}{V_p}{p}  \big).
\]
Using the interpolation (valid for $p\ge 3/2$)
\begin{equation*}%	\label{ip1}
\norm{v}{2p'}{} \le C\norm{v}{2}{\frac{2p-3}{2p}} \norm{v}{1,2}{\frac{3}{2p}},
\end{equation*}
integration by parts,  \eqref{i2} and the Poincar\'{e} inequality, we have
\begin{align*}
D_2&=\int_{\Omega} (\bu_1\otimes \bu_1 - \bu_2\otimes \bu_2)\cdot \nabla(\bu_1 - \bu_2)\, dx=\int_{\Omega} (\bu_2\otimes (\bu_1 - \bu_2))\cdot \nabla(\bu_1 - \bu_2)\, dx\\
&=-\int_{\Omega} (\nabla \bu_2 \cdot ((\bu_1 - \bu_2)\otimes(\bu_1 - \bu_2))\, dx\le  \norm{\bu_2}{V_p}{} \norm{\bu_1 - \bu_2}{2p'}{2}\\
&\le \norm{\bu_2}{V_p}{} \norm{\bu_1 - \bu_2}{2}{\frac{2p-3}{p}}
	\norm{\bu_1 - \bu_2}{V_2}{\frac{3}{p}}
\le
\epsilon \norm{\bu_1 - \bu_2}{V_2}{2} + C_\epsilon
\norm{\bu_2}{V_p}{\puniq} \norm{\bu_1 - \bu_2}{2}{2}.
\end{align*}
The estimate of $D_3$ is straightforward.
Summarizing these estimates with $\epsilon>0$ small enough finishes the
proof.
\end{proof}
We see that if at least one weak solution belongs to $L^{\puniq}(0,T;V_p)$, one can apply the Gronwall inequality to \eqref{eq:unique} to conclude the continuous dependence on data and/or the uniqueness of solution. We also recall that
for $p\ge 5/2$, one has $p\ge \puniq$, and consequently uniqueness holds true
in the class of weak solutions.

\subsection{Main result}
Let us now formulate our main result.

\begin{theorem}	\label{th:main}
Let $p \geq 11/5$, $\beh \in L^{p'}(0,T;V_p')$ and $\bu_0 \in G$.
In case that $p<5/2$, assume moreover $\beh \in N^{\delta,p'}(0,T;V_p')$ with some $\delta > \duniq$, where
\begin{equation}\label{duniq}
\duniq := (p-1)\left(\frac{5}{2p} - 1\right).
\end{equation}
Then for an arbitrary weak solution $\bu$ the following holds true:
\begin{enumerate}
\item
For any $t_0 \in (0,T)$ we have $\bu \in L^{\puniq}(t_0,T;V_p)$.
\item
  If $\bu_0\in V_p$, the conclusion holds for $t_0=0$ as well.
%%  \marginpar{zvazit zda vadi kolize $h$ a $\beh$}
\end{enumerate}
\end{theorem}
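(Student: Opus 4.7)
The overall strategy is an iteration scheme that progressively strengthens the time-integrability of $\bu$ in $V_p$, following and refining the method of \cite{Bekp10}. Starting from the baseline $\bu\in L^p(V_p)\cap L^\infty(G)$ together with the dual bound $\dert\bu\in L^{p'}(V_p')$ from Lemma~\ref{lm:dual1}, one constructs a sequence $q_0<q_1<\cdots$ with $q_k\nearrow\puniq$, showing at each stage that $\bu\in L^{q_k}(t_0,T;V_p)$. Reaching $\puniq=2p/(2p-3)$ in finitely many steps is sufficient, as this is precisely the exponent at which Lemma~\ref{lm:diff} delivers uniqueness, and the same threshold governs the Nikolskii estimates of the iteration.

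The engine of the iteration is a fractional-difference energy estimate. One subtracts the weak formulation \eqref{wf1} at times $t$ and $t+h$ and tests the resulting difference with $d^h\bu$; the monotonicity \eqref{i5-aS} then produces
$$\tfrac12\ddt\norm{d^h\bu}{2}{2} + c\bigl(\norm{d^h\bu}{V_2}{2} + \norm{d^h\bu}{V_p}{p}\bigr) \le \dual{K_0(\tau^h\bu)-K_0(\bu)}{d^h\bu} + \dual{d^h\beh}{d^h\bu}.$$
The convective remainder is rewritten via the identity $\tau^h\bu\otimes\tau^h\bu-\bu\otimes\bu = d^h\bu\otimes\tau^h\bu + \bu\otimes d^h\bu$, integrated by parts against $\nabla d^h\bu$, and bounded by Hölder together with the interpolation \eqref{ip0}, the crucial factor being $\norm{\bu}{L^{q_k}(V_p)}{}$. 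After time integration and Young absorption into the monotone term, one obtains $\int\norm{d^h\bu}{V_p}{p}\,dt \le C h^{\alpha(q_k)}$ with $\alpha(q_k)>0$ as soon as $q_k>p$, i.e., $\bu\in N^{\alpha(q_k)/p,\,p}(V_p)$. Combining this with the Nikolskii embedding \eqref{nimb2} --- and sharpening it through Lemma~\ref{lm23} against the dual regularity $\dert\bu\in L^{p'}(V_p')$ whenever that gives a stronger gain --- yields $\bu\in L^{q_{k+1}}(V_p)$ with $q_{k+1}>q_k$. The resulting monotone recursion strictly increases $q_k$ towards $\puniq$ and terminates after finitely many steps. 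Part~(2) follows by running the same scheme on $(0,T)$, with the $h$-thick slice near $t=0$ controlled by the initial-time energy estimate proved in Section~\ref{s:AUX}, which is available precisely under $\bu_0\in V_p$ and $\beh\in N^{\delta,p'}(V_p')$ with $\delta>\duniq$.

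The genuine obstacle is the critical exponent $p=11/5$. There the interpolation \eqref{ip0} is sharp, every application of Young's inequality in the convective estimate degenerates to equality, and the naive start $q_0=p$ gives $\alpha(p)=0$, so the recursion fails to launch. The remedy is the vector-valued Gehring-type reverse-Hölder inequality established in Section~\ref{s:AUX}: it upgrades $\bu\in L^{p}_{\mathrm{loc}}(V_p)$ to $\bu\in L^{p+\varepsilon}_{\mathrm{loc}}(V_p)$ for some $\varepsilon>0$, providing a strictly subcritical initial exponent $q_0=p+\varepsilon$. With this in hand one has $\alpha(q_0)>0$, the recursion becomes genuinely contracting towards $\puniq$, and the scheme of \cite{Bekp10} then carries $q_k$ up to $\puniq$ exactly as in the strictly subcritical case.
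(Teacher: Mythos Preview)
Your overall strategy is correct and matches the paper's approach: iterate fractional-difference energy estimates (the paper packages these as Lemmas~\ref{lm:key1} and~\ref{lm:key2}), control the boundary term at $t_0$ by the initial-time estimate of Lemma~\ref{lm:w2}, and invoke Gehring (Lemma~\ref{lm:GHR}) to launch the iteration when $p=11/5$. For $p\in(11/5,5/2)$ your scheme goes through exactly as described.

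There is, however, a genuine gap in your treatment of the critical case $p=11/5$. You claim that once Gehring provides $q_0=p+\varepsilon>p$, ``the recursion becomes genuinely contracting towards $\puniq$, and the scheme of \cite{Bekp10} then carries $q_k$ up to $\puniq$ exactly as in the strictly subcritical case.'' This is not so. The subcritical recursion (cf.\ \eqref{tsigma} in the paper) reads $\tilde\sigma=\alpha+\beta\sigma$ with $\alpha=\frac{5p-11}{(p-1)(5p-6)}$ and $\beta=\frac{6}{(p-1)(5p-6)}$; at $p=11/5$ one has $\alpha=0$ and $\beta=1$, so the map is the identity and no progress is made \emph{regardless of the starting value}. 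Equivalently, if you iterate purely in the $L^{q_k}(V_p)$ scale, one step takes $q_k>p$ to some $\sigma_{k+1}>0$, but the Nikolskii embedding \eqref{nimb2} returns you to $1/q_{k+1}=1/p-\sigma_{k+1}+\epsilon\ge 1/q_k$, so $q_{k+1}\le q_k$: the $\epsilon$-loss eats the gain.

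The missing ingredient is a second quantity that your outline does not exploit: the $N^{\tau,\infty}(t_0,T;G)$ regularity, i.e., the pointwise-in-time bound $\sup_t\norm{\dh\bu(t)}{2}{2}\le Ch^{2\tau}$, which the same difference energy estimate produces for free alongside $\int\norm{\dh\bu}{V_p}{p}\le Ch^{\alpha}$. After one application of Gehring plus Lemma~\ref{lm:key1} one has some fixed $\tau>0$; feeding this into the convective estimate (the $\tau$-term in the first line of \eqref{conc:k2}) yields, for $p=11/5$, the modified recursion
\[
\tilde\sigma=\sigma(1-\tau)+\frac{\tau}{3},
\]
which is a genuine contraction with fixed point $1/3>\suniq=3/22$. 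Thus Gehring alone does not resurrect the subcritical scheme; one must couple the $N^{\sigma,p}(V_p)$ iteration with the $N^{\tau,\infty}(G)$ information to obtain a contracting map at $p=11/5$.
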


Recalling that $\puniq$ is the critical integrability condition that
enables one to handle the equation for \emph{difference} of two solutions
(see Lemma~\ref{lm:diff} above) we also obtain:

\begin{corollary} \label{cor:uniq}
Let the conditions of Theorem~\ref{th:main} hold.
Let $\bu_1$, $\bu_2$ be two weak solutions that coincide
on some $[0,\tau]$, $\tau > 0$. Then $\bu_1 = \bu_2$ on $[0,T]$.
The same conclusion holds if $\bu_1(0)=\bu_2(0) \in V_p$.
\end{corollary}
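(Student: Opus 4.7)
The plan is to combine Theorem~\ref{th:main} with the ``fundamental'' difference inequality of Lemma~\ref{lm:diff}. Since both solutions share the same right-hand side $\beh$, the last term on the right of \eqref{eq:unique} disappears, leaving
\begin{equation*}
\ddt \norm{\bu_1 - \bu_2}{2}{2} \le C \norm{\bu_2}{V_p}{\puniq} \norm{\bu_1 - \bu_2}{2}{2},
\end{equation*}
after dropping the non-negative dissipation term on the left. To invoke Gronwall's lemma on an interval $[a,b] \subset [0,T]$, we only need the map $t \mapsto \norm{\bu_2(t)}{V_p}{\puniq}$ to lie in $L^1(a,b)$, i.e.\ $\bu_2 \in L^{\puniq}(a,b;V_p)$. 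This is exactly the information supplied by Theorem~\ref{th:main}.

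For the first assertion, I would argue as follows. Assume $\bu_1 = \bu_2$ on $[0,\tau]$ with $\tau > 0$. Pick any $t_0 \in (0,\tau)$. By part~(i) of Theorem~\ref{th:main}, applied to the solution $\bu_2$, we have $\bu_2 \in L^{\puniq}(t_0, T; V_p)$. Since $\norm{\bu_1(\tau)-\bu_2(\tau)}{2}{2} = 0$, integrating the displayed inequality from $\tau$ to any $t \in [\tau,T]$ and applying Gronwall yields $\norm{\bu_1(t)-\bu_2(t)}{2}{2} = 0$ for all $t \in [\tau,T]$. Combined with the assumed coincidence on $[0,\tau]$ this gives $\bu_1 = \bu_2$ on $[0,T]$.

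For the second assertion, the extra hypothesis $\bu_1(0) = \bu_2(0) \in V_p$ permits us to invoke part~(ii) of Theorem~\ref{th:main} with $t_0 = 0$, so $\bu_2 \in L^{\puniq}(0,T;V_p)$. Since $\bu$ has a representative in $C([0,T];G)$, the quantity $\norm{\bu_1(t)-\bu_2(t)}{2}{2}$ is continuous and vanishes at $t=0$; Gronwall applied on $[0,T]$ then gives equality throughout.

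I do not expect any real obstacle here: the heavy lifting has been done in Theorem~\ref{th:main} (which secures the critical $L^{\puniq}(V_p)$ bound up to $t=0$ in the strictly subcritical and critical range $p \ge 11/5$) and in Lemma~\ref{lm:diff} (which provides the correctly weighted differential inequality with the weight $\norm{\bu_2}{V_p}{\puniq}$). The only point worth stating carefully is that \eqref{eq:unique} is to be read in its integrated form, and that Gronwall's inequality is admissible precisely because $\norm{\bu_2}{V_p}{\puniq}$ is integrable in time — which is exactly the content of Theorem~\ref{th:main}.
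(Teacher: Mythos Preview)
Your proof is correct and follows essentially the same approach as the paper: combine Theorem~\ref{th:main} to secure $\bu_2 \in L^{\puniq}(t_0,T;V_p)$ (with $t_0\in(0,\tau)$ in the first case and $t_0=0$ in the second), then apply Lemma~\ref{lm:diff} and Gronwall. Your exposition is in fact somewhat more careful than the paper's, which is rather terse here.
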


\begin{corollary} \label{cor:maxr}
  Let $p \geq 11/5$ and $\beh \in N^{1/p',p'}(0,T;V_p')$, $u_0\in G$, $\bu$ be a weak solution, and $t_0 \in (0,T)$ be arbitrary. Then
\begin{equation*}%	\label{maxr1}
	\bu \in N^{\frac12,\infty}(t_0,T;L^2) \cap N^{\frac1p,p}(t_0,T;V_p) \cap N^{\frac12,2}(t_0,T;V_2)
\end{equation*}
The same conclusion holds for $t_0=0$ provided that $\bu_0\in V_p$ and $\sup_{h\in(0,T)}\dashint_{0}^{h}\norm{\beh}{V_p'}{p'}<+\infty$.
\end{corollary}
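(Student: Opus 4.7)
The plan is to exploit the parallel between this corollary and the weak-strong inequality of Lemma~\ref{lm:diff}: rather than testing with the difference of two solutions, I would test the weak equation \eqref{wf1} with the time-difference $d^h\bu$. This yields a Gronwall inequality for $\norm{d^h\bu}{2}{2}$ whose right-hand side is controlled by Theorem~\ref{th:main}, and whose left-hand side produces all three Nikolskii estimates upon integration.

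More precisely, subtracting \eqref{wf1} at $t$ and $t+h$ and pairing with $d^h\bu$, the monotonicity \eqref{i5-aS} yields $\dual{d^h N(\bu)}{d^h\bu}\ge c\,(\norm{d^h\bu}{V_2}{2}+\norm{d^h\bu}{V_p}{p})$. For the convective term, the decomposition $\bu(t+h)\otimes\bu(t+h)-\bu(t)\otimes\bu(t)=d^h\bu\otimes\bu(t+h)+\bu(t)\otimes d^h\bu$ reduces, by $\diver\bu(t+h)=0$ on the first piece and integration by parts on the second, to $-\intom(d^h\bu\otimes d^h\bu):\nabla\bu(t)\dx$. Precisely the interpolation used in Lemma~\ref{lm:diff} then bounds this by $\epsilon\norm{d^h\bu}{V_2}{2}+C\norm{\bu}{V_p}{\puniq}\norm{d^h\bu}{2}{2}$. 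Handling the forcing by $|\dual{d^h\beh}{d^h\bu}|\le\epsilon\norm{d^h\bu}{V_p}{p}+C\norm{d^h\beh}{V_p'}{p'}$ and absorbing gives
\begin{equation*}
\ddt\norm{d^h\bu}{2}{2}+c\,(\norm{d^h\bu}{V_2}{2}+\norm{d^h\bu}{V_p}{p})\le C\norm{\bu}{V_p}{\puniq}\norm{d^h\bu}{2}{2}+C\norm{d^h\beh}{V_p'}{p'}.
\end{equation*}

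Theorem~\ref{th:main} provides $\bu\in L^{\puniq}(s,T;V_p)$, so Gronwall's lemma applied from a suitable time $s$ yields $\norm{d^h\bu(t)}{2}{2}\le C\norm{d^h\bu(s)}{2}{2}+Ch$, where the $Ch$ term uses $\beh\in N^{1/p',p'}(V_p')$. For $t_0>0$ I would choose $s\in(t_0/2,t_0)$ via a mean-value argument: Lemma~\ref{lm23} with $\alpha=0$, $\beta=1$, $X=V_p$, $H=L^2$ applied to $\bu\in L^p(V_p)\cap W^{1,p'}(V_p')$ yields $\bu\in N^{1/2,2}(G)$, hence $\int_{t_0/2}^{t_0}\norm{d^h\bu}{2}{2}\,dt\le Ch$, and such $s$ exists with $\norm{d^h\bu(s)}{2}{2}\le Ch$. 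For $t_0=0$ with $\bu_0\in V_p$, I would combine the initial-time regularity from Section~\ref{s:AUX}, the identity $\tfrac12\norm{\bu(h)-\bu_0}{2}{2}=\int_0^h\dual{\dert\bu}{\bu-\bu_0}\,dt$, and the averaged hypothesis $\sup_h\dashint_0^h\norm{\beh}{V_p'}{p'}<\infty$ to conclude $\norm{\bu(h)-\bu_0}{2}{2}\le Ch$ directly. In either case, the uniform bound $\norm{d^h\bu(t)}{2}{2}\le Ch$ on $(t_0,T-h)$ is the $N^{1/2,\infty}(L^2)$-estimate, and integrating the differential inequality over $(s,T-h)$ yields the remaining $N^{1/p,p}(V_p)$ and $N^{1/2,2}(V_2)$ bounds simultaneously.

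I expect the initial case $t_0=0$ to be the subtle point: the estimate $\norm{\bu(h)-\bu_0}{2}{2}\le Ch$ rests on a nontrivial local $L^\infty(V_p)$-control of $\bu$ near $t=0$, and the hypothesis on the mean of $\norm{\beh}{V_p'}{p'}$ over $(0,h)$ is exactly tight to keep the forcing contribution $O(h)$ rather than a slower rate. The interior case is routine once Theorem~\ref{th:main} and Lemma~\ref{lm23} are in hand.
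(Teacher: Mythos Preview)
Your approach is essentially the same as the paper's. The paper observes that Lemma~\ref{lm:diff} applied with $\bu_1=\tau^h\bu$, $\bu_2=\bu$ (hence $\beh_1-\beh_2=d^h\beh$) gives exactly the differential inequality you derive by hand, and then invokes Theorem~\ref{th:main} and Gronwall's lemma; you simply unpack this computation explicitly. The one minor variation is how the initial value $\norm{d^h\bu(s)}{2}{2}\le Ch$ is obtained for interior $t_0$: the paper picks $s<t_0$ with $\bu(s)\in V_p$ and $s$ a semi-Lebesgue point of $\beh$ (both hold a.e.) and cites Lemma~\ref{lm:w2} directly, whereas you use Lemma~\ref{lm23} and a mean-value argument. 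Both routes are valid; note that your $s$ depends on $h$, but the Gronwall constant is controlled by $\int_{t_0/2}^T\norm{\bu}{V_p}{\puniq}$ uniformly, so this causes no trouble. For $t_0=0$ your sketch is again Lemma~\ref{lm:w2}, and your worry about needing ``local $L^\infty(V_p)$-control near $t=0$'' is unfounded: that lemma only uses the single value $\norm{\bu_0}{V_p}{p}$ together with the semi-Lebesgue hypothesis on $\beh$.
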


\section{Auxiliary estimates}	\label{s:AUX}

In this section we summarize several auxiliary estimates. We establish the ``initial time regularity'' provided
$\bu_0$ belongs to $V_p$, see Lemma~\ref{lm:w2}, which is a starting
point of our iteration scheme.
Finally, we show how the integrability of $\bu$ can be improved by reverse H\"older inequality with increasing support. This allows us later to prove the main theorem also for $p=11/5$.

\begin{definition}
  Let $\beh:(0,T)\to V_p'$. We call $t\in[0,T)$ \sleb\ point of $\beh$ if $\sup_{h\in(0,T-t)}\dashint_{t}^{t+h}\norm{\beh}{V_p'}{p'}$ is finite.
\end{definition}

\begin{lemma} \label{lm:w2}
Let $\bu$ be the representative of a weak solution continuous with values in $L^2(\Omega)$,  $p\ge 11/5$ and $\beh \in N^{\delta,p'}(0,T; V_p')$ with $1/p'\ge \delta > 2\tau/p'\ge 0$. If $t_0 \in [0,T)$  and $\bu(t_0)\in V_p$ then
\begin{equation} 	\label{nik:t00}
\exists c>0, \forall h\in (0,T-t_0):\norm{\bu(t_0+h) - \bu(t_0)}{2}{2} \le ch^{2\tau}.
\end{equation}
If in addition $t_0$ is a \sleb\ point of $\beh$ then
\begin{equation} 	\label{nik:t0}
\exists c>0, \forall h\in (0,T-t_0):\norm{\bu(t_0+h) - \bu(t_0)}{2}{2} \le ch.
\end{equation}

%If $\bu_{0}\in V_p$, the conclusion \eqref{nik:t0} holds with $t_0=0$, too.
\end{lemma}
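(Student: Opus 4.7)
Testing equation \eqref{wf1} with $\bw := \bu - \bu(t_0) \in V_p$ and integrating over $(t_0, t_0+h)$, we obtain, writing $Y_h := \norm{\bw(t_0+h)}{2}{2}$ and $X_h := \int_{t_0}^{t_0+h} \norm{\bw}{V_p}{p} dt$,
\begin{equation*}
\tfrac{1}{2} Y_h = \int_{t_0}^{t_0+h} \bigl(-\dual{N(\bu)}{\bw} + \dual{K_0(\bu)}{\bw} + \dual{\beh}{\bw}\bigr) \, dt.
\end{equation*}
The plan is to bound each of the three terms and derive a closed inequality for $Y_h + c X_h$, and then treat the forcing contribution differently in the two cases.

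For the dissipation, the monotonicity \eqref{i5-aS} applied to $-\dual{N(\bu) - N(\bu(t_0))}{\bw}$ yields the coercive contribution $-c(\norm{\bw}{V_2}{2} + \norm{\bw}{V_p}{p})$, while the remaining piece $\dual{N(\bu(t_0))}{\bw}$ is bounded via \eqref{A2} and Young by $\epsilon X_h + C_{\bu(t_0)} h$. For the convective term we decompose $\bu = \bu(t_0) + \bw$: the cubic contribution $\intom (\bw \otimes \bw) : \nabla \bw \, \dx$ vanishes by $\diver \bw = 0$ and the no-slip condition, and the remaining cross and linear pieces are treated as in the proof of Lemma~\ref{lm:dual1}, using the interpolation \eqref{ip0} with $a = 3/(5p-6)$, the embedding $V_p \hookrightarrow L^{2p'}$ (valid for $p \ge 9/5$) and the uniform $L^\infty(L^2)$ bound on $\bw$. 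The assumption $p \ge 11/5$ is exactly what permits Young's inequality to absorb all the resulting $\norm{\bw}{V_p}{}$ factors into the dissipation, producing the key estimate
\begin{equation*}
Y_h + c X_h \le C_{\bu(t_0)} h + 2 \Big| \int_{t_0}^{t_0+h} \dual{\beh}{\bw} \, dt \Big|. \tag{$\ast$}
\end{equation*}

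In the semi-Lebesgue case \eqref{nik:t0}, H\"older and Young combined with $\int_{t_0}^{t_0+h} \norm{\beh}{V_p'}{p'} dt \le Ch$ directly yield $\left|\int \dual{\beh}{\bw}\right| \le \epsilon X_h + Ch$, which together with $(\ast)$ gives the linear rate. For the Nikolskii case \eqref{nik:t00}, the plan is to split $\beh = (\beh - \overline{\beh}) + \overline{\beh}$ with $\overline{\beh} := \dashint_{t_0}^{t_0+h} \beh$. A routine Jensen--Fubini argument exploiting the Nikolskii seminorm of $\beh$ gives
\begin{equation*}
\int_{t_0}^{t_0+h} \norm{\beh(t) - \overline{\beh}}{V_p'}{p'} \, dt \le C \, h^{p'\delta},
\end{equation*}
so the fluctuation contributes $\le \epsilon X_h + C h^{p'\delta} \le \epsilon X_h + C h^{2\tau}$ (since $2\tau < p'\delta$). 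The mean-part pairing $\dual{\overline{\beh}}{\int_{t_0}^{t_0+h} \bw \, dt}$ is then treated using the identity $\int_{t_0}^{t_0+h} \bw \, dt = h(\overline{\bu} - \bu(t_0)) \in V_p$ and bounded by Young, combining $\norm{\overline{\beh}}{V_p'}{}$ with the $V_p$-control of $\overline{\bu} - \bu(t_0)$ afforded by $\bu(t_0) \in V_p$ and the $L^p(V_p)$-bound on $\bu$.

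The principal technical obstacle will be controlling the mean-part contribution in the Nikolskii case, since the global Nikolskii seminorm does not directly control the local $L^{p'}$ mass of $\beh$ at an arbitrary $t_0$. The strict inequality $\delta > 2\tau/p'$ is needed precisely to afford a small sub-optimality in the polynomial rate, whereas the semi-Lebesgue statement \eqref{nik:t0} is considerably more straightforward and corresponds to the boundary case $2\tau = 1$.
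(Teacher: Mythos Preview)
Your overall strategy---testing \eqref{wf1} with $\bw=\bu-\bu(t_0)$ and integrating---is sound and yields $(\ast)$ correctly; the convective-term expansion works because $p\ge 11/5$ lets every power of $\norm{\bw}{V_p}{}$ arising from the cross terms be absorbed by Young into $X_h$. The paper takes a somewhat different route: it writes
\[
\norm{\bw(t_0+h)}{2}{2}=\bigl(\norm{\bu(t_0+h)}{2}{2}-\norm{\bu(t_0)}{2}{2}\bigr)-2\bigl(\bu(t_0+h)-\bu(t_0),\bu(t_0)\bigr),
\]
handles the first bracket via the energy identity \eqref{ei} (where $K_0$ drops out automatically) and the second via the dual estimate of Lemma~\ref{lm:dual1}. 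Both routes arrive at the same intermediate inequality $\norm{\bw(t_0+h)}{2}{2}\le C_{\bu(t_0)}h+C\int_{t_0}^{t_0+h}\norm{\beh}{V_p'}{p'}$; the paper's is quicker since it recycles \eqref{ei} and Lemma~\ref{lm:dual1} instead of re-estimating the convective term by hand.

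There is, however, a genuine gap in your Nikolskii case. The fluctuation--mean split does not actually help: after the Young step you propose for the mean part, one is left with $h\norm{\overline{\beh}}{V_p'}{p'}\le\int_{t_0}^{t_0+h}\norm{\beh}{V_p'}{p'}$, i.e.\ precisely the local $L^{p'}$ mass of $\beh$ that you correctly flag as the obstacle but do not resolve. The missing ingredient is the embedding $N^{\delta,p'}\hookrightarrow L^{p'/(1-2\tau)}$ (valid by \eqref{nimb2} exactly because $\delta>2\tau/p'$), followed by H\"older:
\[
\int_{t_0}^{t_0+h}\norm{\beh}{V_p'}{p'}\,ds\ \le\ h^{2\tau}\Bigl(\int_{t_0}^{t_0+h}\norm{\beh}{V_p'}{p'/(1-2\tau)}\,ds\Bigr)^{1-2\tau}\ \le\ Ch^{2\tau}.
\]
This is what the paper does directly. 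Once you supply this step, your fluctuation estimate---though correct---becomes redundant, since you may simply bound $\int_{t_0}^{t_0+h}\dual{\beh}{\bw}\,dt$ by $(\int\norm{\beh}{V_p'}{p'})^{1/p'}X_h^{1/p}\le\epsilon X_h+Ch^{2\tau}$ without any splitting.
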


%% \begin{remark}
%%   Let $\bu$ denotes the continuous representative with values in $L^2(\Omega)$.
%%   The statement \eqref{nik:t0} holds in every point $t\in[0,T]$ where $\bu(t)\in V_p$ and $t$ is the Lebesgue point of $\norm{\beh(t)}{V_p'}{p'}$. In particular it holds for $t=0$ if $0$ is a Lebesgue point of $\norm{\beh(t)}{V_p'}{p'}$ and $u_0\in V_p$.
%%   \marginpar{check}
%% \end{remark}

\begin{proof}
Recall that $\bu$ stands for the representative continuous in $[0,T]$ with values in $L^2(\Omega)$.
  We can write
\begin{equation*}%	\label{in-g}
\begin{aligned}
\norm{\bu(t_0+h) - \bu(t_0)}{2}{2}
 &= \big( \bu(t_0+h) - \bu(t_0) , \bu(t_0+h) - \bu(t_0) \big)
\\
&= \big( \bu(t_0+h) - \bu(t_0) , ( \bu(t_0+h) + \bu(t_0)) - 2 \bu(t_0) \big)
\\
&= \norm{\bu(t_0+h)}{2}{2} - \norm{\bu(t_0)}{2}{2}
	- 2 \big( \bu(t_0+h) - \bu(t_0) , \bu(t_0) \big)
\\
 &= I_1 + I_2.
\end{aligned}
\end{equation*}
We start estimating $I_1$. Using the energy equality \eqref{ei} and \eqref{A1} we get
$$
\begin{aligned}
I_1=\int_{t_0}^{t_0+h}&(-\dual{N(\bu(s))}{\bu(s)}+\dual{h(s)}{\bu(s)})\,ds\\ \leq
&\int_{t_0}^{t_0+h}(-c \big( \norm{\bu(s)}{1,2}{2} + \norm{\bu(s)}{1,p}{p} \big)+C\norm{\beh(s)}{V_p'}{p'})\,ds.
\end{aligned}
$$

The term $I_2$ can be rewritten  as time derivative of $u(t)$, i.e., we have
$$
I_2=-2\int_{t_0}^{t_0+h}\dual{\frac{d}{dt}\bu(s)}{\bu(t_0)}\, ds.
$$

Then we use \eqref{Ph1} and finally Young's inequality with $\epsilon>0$ to obtain
$$
\begin{aligned}
  I_2&\leq C\int_{t_0}^{t_0+h} \big( 1 + \norm{\bu(s)}{V_p}{p-1}+\norm{\beh(s)}{V_p'}{} \big)\norm{\bu(t_0)}{V_p}{}\,ds\\
&\leq \int_{t_0}^{t_o+h}\epsilon \norm{\bu(s)}{V_p}{p}+ C(1+\norm{\bu(t_0)}{V_p}{p}+\norm{\beh(s)}{V_p'}{p'})\,ds.
\end{aligned}
$$

If we combine the estimates of $I_1$ and $I_2$ and choose $\epsilon>0$ sufficiently small we get
$$
\begin{aligned}
  &\norm{\bu(t_0+h) - \bu(t_0)}{2}{2}\\
  &\phantom{M}\leq \int_{t_0}^{t_0+h}(-\frac c2 \big( \norm{\bu(s)}{1,2}{2} + \norm{\bu(s)}{1,p}{p} \big)ds+Ch\big(\dashint_{t_0}^{t_0+h}\norm{\beh(s)}{V_p'}{p'}+1+\norm{\bu(t_0)}{V_p}{p}ds\big)\\
  &\phantom{M}\leq
  Ch\dashint_{t_0}^{t_0+h}\norm{\beh(s)}{V_p'}{p'}+1+\norm{\bu(t_0)}{V_p}{p}\,ds=Ch(1+\norm{\bu(t_0)}{V_p}{p})+C\int_{t_0}^{t_0+h}\norm{\beh(s)}{V_p'}{p'}\,ds.
\end{aligned}
$$

Consequently, if $t_0$ is a \sleb\ point of $\beh:(0,T)\to V_p'$ and $\bu(t_0)\in V_p$ we get \eqref{nik:t0}. Similarly, if $\beh \in N^{\delta,p'}(0,T; V_p')$ with $1/p'>\delta > 2\tau/p'$ then by embedding theorem, we get that $\beh \in L^{p'/(1-2\tau)}(0,T; V_{p}')$. Thus, by H\"{o}lder's inequality, we see that
$$
\int_{t_0}^{t_0+h}\norm{\beh(s)}{V_p'}{p'}\,ds\le \left(\int_{t_0}^{t_0+h}\norm{\beh(s)}{V_p'}{\frac{p'}{1-2\tau}}\,ds\right)^{1-2\tau} h^{2\tau}.
$$
Hence, altogether we have
$$
\begin{aligned}
  &\norm{\bu(t_0+h) - \bu(t_0)}{2}{2}\leq C(h+h^{2\tau})
\end{aligned}
$$
and \eqref{nik:t00} follows.
\end{proof}

%%%%%%%%%%%%%%%%%%%%%%%%%%%%%%%%%%%%%%%%%%%%%%%%%%%%%%
\medskip
\begin{remark}
Since a weak solution has a representative continuous with values in $L^2(\Omega)$ that satisfies $\bu(0)=\bu_0$, the statement \eqref{nik:t0} holds provided $\bu_0\in V_p$ and $0$ is \sleb\ point of $\beh$.
\end{remark}

\medskip
\begin{remark}

Note that it follows from \eqref{rw1}, \eqref{rdt1} and Lemma~\ref{lm23}
(with $\alpha=1/p$, $\beta=1+1/p'$) that $\bu \in N^{1/2,2}(0,T;G)$, i.e.,
\begin{equation*}
\int\limits_{t_0}^{T-h} \norm{\dh \bu}22 \le C h \,;
\end{equation*}
in \eqref{nik:t0} we get this estimate, so to say, pointwise.
\end{remark}

%%%% Gehring -- novy dukaz by D.P.

\begin{lemma}\label{lm:GHR}
Let $p\ge 11/5$, let $\beh \in L^{q_0}(0,T;V_p')$ for some
$q_0 > p'$. Then there is $q>p$ such that
$\bu \in L^q_{\rm loc}(0,T;V_p)$. Moreover, if $\bu_0 \in V_p$,
the conclusion holds globally.
\end{lemma}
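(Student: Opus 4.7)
The plan is to derive a scale-invariant reverse H\"older inequality for $G(t):=\norm{\bu(t)}{V_p}{p}$ and then apply a Gehring self-improvement.

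\textbf{Local Caccioppoli.} Fix $t_0\in(0,T)$ and $h>0$ so that $I^*:=(t_0-h,t_0+h)\subset(0,T)$, and set $I:=(t_0-h/2,t_0+h/2)$. Let $\eta\in C_c^\infty(I^*)$ with $\eta\equiv1$ on $I$ and $|\eta'|\le C/h$, and denote by $\bar\bu$ the time average of $\bu$ on $I^*$. By Lemma~\ref{lm:dual1}, the function $\eta^2(\bu-\bar\bu)$ is an admissible test in \eqref{wf1}. Since $\bar\bu$ is time-independent, integration by parts in time gives
\[
\int_{I^*}\eta^2\,\dual{\dert\bu}{\bu-\bar\bu}\,dt = -\int_{I^*}\eta\,\eta'\,\norm{\bu-\bar\bu}{2}{2}\,dt,
\]
and the convective term reduces to $-\int_{I^*}\eta^2\dual{K_0(\bu)}{\bar\bu}\,dt$ because $\dual{K_0(\bu)}{\bu}=0$ by $\diver\bu=0$. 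Together with the coercivity \eqref{A1}, this yields
\[
\int_{I^*}\eta^2\,G\,dt \le C\int_{I^*}\eta|\eta'|\norm{\bu-\bar\bu}{2}{2}\,dt + R,
\]
where $R$ collects the remainders $\int\eta^2\dual{N(\bu)}{\bar\bu}$, $\int\eta^2\dual{K_0(\bu)}{\bar\bu}$, and $\int\eta^2\dual{\beh}{\bu-\bar\bu}$.

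\textbf{Reverse H\"older inequality.} The decisive observation is the oscillation bound $\int_{I^*}\norm{\bu-\bar\bu}{2}{2}\,dt\le Ch$, which follows from the Nikolskii-type estimate $\int_0^{T-\tau}\norm{\dh\bu}{2}{2}\,dt\le C\tau$ (itself a consequence of \eqref{rw1}, \eqref{rdt1} and Lemma~\ref{lm23}) through the identity
\[
\int_{I^*}\norm{\bu-\bar\bu}{2}{2}\,dt = \frac{1}{4h}\iint_{I^*\times I^*}\norm{\bu(t)-\bu(s)}{2}{2}\,ds\,dt.
\]
Hence $\int_{I^*}\eta|\eta'|\norm{\bu-\bar\bu}{2}{2}\le C$, and the $1/h$ blow-up of $|\eta'|$ is exactly absorbed. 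For the remainders in $R$ one uses Jensen's inequality $\norm{\bar\bu}{V_p}{}\le(\dashint_{I^*}G)^{1/p}$, the growth \eqref{A2} for $N(\bu)$, a spatial Sobolev-interpolation estimate for $K_0(\bu)$ relying on the $L^\infty(L^2)$ a priori bound in the spirit of \eqref{ip0} (this is where $p\ge 11/5$ is essential), and Young's inequality to separate the forcing. Dividing by $|I|$ one arrives at a scale-invariant reverse H\"older inequality
\[
\dashint_I G\,dt \le C\Bigl(\dashint_{I^*} G^\sigma\,dt\Bigr)^{1/\sigma} + C\dashint_{I^*}\norm{\beh}{V_p'}{p'}\,dt
\]
for some $\sigma\in(0,1)$, with $C$ independent of $t_0$ and $h$.

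\textbf{Gehring and globalization.} Since $\norm{\beh}{V_p'}{p'}\in L^{q_0/p'}(0,T)$ with $q_0/p'>1$, a scalar Gehring lemma applied to $G$ gives $G\in L^{1+\varepsilon}_{\rm loc}(0,T)$ for some $\varepsilon=\varepsilon(p,q_0)>0$, equivalently $\bu\in L^{p(1+\varepsilon)}_{\rm loc}(0,T;V_p)$, so one sets $q:=p(1+\varepsilon)$. For the global statement under $\bu_0\in V_p$: the assumption $q_0>p'$ ensures that $0$ is a \sleb\ point of $\beh$, so Lemma~\ref{lm:w2} provides $\norm{\bu(h)-\bu_0}{2}{2}\le Ch$. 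This pointwise bound replaces the time-Poincar\'e inequality on intervals of the form $(0,h)$ and lets the reverse H\"older, hence the Gehring improvement, extend up to $t=0$.

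The main difficulty --- and the reason for the oscillation test function $\bu-\bar\bu$ rather than $\bu$ itself --- is the scale invariance of the reverse H\"older. Testing with $\eta^2\bu$ alone generates $\tfrac{C}{h}\int\norm{\bu}{2}{2}$, which yields only the trivial $O(1/h)$ bound and cannot feed into Gehring. The Nikolskii time regularity $\bu\in N^{1/2,2}(0,T;L^2)$ is what absorbs the $|\eta'|$-factor, while the divergence-free structure is used twice: once to kill the principal convective term $\dual{K_0(\bu)}{\bu}$, and once to make the residual $\dual{K_0(\bu)}{\bar\bu}$ sub-leading relative to $G$.
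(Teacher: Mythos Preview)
Your overall architecture (local Caccioppoli with an oscillation test function, then Gehring) matches the paper's, but the step you label ``decisive'' is precisely where the argument breaks. The global Nikolskii bound $\int_0^{T-\tau}\norm{d^\tau\bu}{2}{2}\,dt\le C\tau$ yields $\int_{I^*}\norm{\bu-\bar\bu}{2}{2}\,dt\le Ch$ with a \emph{global} constant $C$, so after absorbing $|\eta'|\sim 1/h$ you obtain $\int_{I^*}\eta|\eta'|\norm{\bu-\bar\bu}{2}{2}\,dt\le C$. When you divide by $|I|\sim h$ to pass to averages, this produces an additive term $C/h$ on the right-hand side of your reverse H\"older inequality, which blows up as $h\to 0$. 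Such a term cannot be absorbed into $(\dashint_{I^*}G^\sigma)^{1/\sigma}$ or into $\dashint_{I^*}F$ for any fixed $F\in L^1$, so Gehring's lemma does not apply. In short, you cancelled one factor of $1/h$ but a scale-invariant reverse H\"older requires cancelling two.

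The paper's proof avoids this by \emph{not} using the global $N^{1/2,2}$ regularity at all for the oscillation term. Instead, one writes $\bu(t)-\buh=\frac1h\int\!\!\int_s^t\partial_\tau\bu\,d\tau\,ds$, pairs this in the $V_p'$--$V_p$ duality with $\bu(t)-\buh$, and invokes the pointwise bound $\norm{\partial_t\bu(\tau)}{V_p'}{}\le C(1+\norm{\bu(\tau)}{V_p}{p-1}+\norm{\beh(\tau)}{V_p'}{})$ from Lemma~\ref{lm:dual1}. This produces
\[
\frac{1}{h}\dashint_{I^*}\norm{\bu-\buh}{2}{2}\,dt \le C\Bigl(1+\dashint_{I^*}\norm{\bu}{V_p}{p-1}\,d\tau\Bigr)\dashint_{I^*}\bigl(\norm{\bu}{V_p}{}+\norm{\buh}{V_p}{}\bigr)\,dt,
\]
i.e.\ a product of two \emph{local averages}, which is genuinely scale-invariant and supplies the sub-$p$ exponent $\sigma=(p-1)/p$ needed for Gehring. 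The issue is not cosmetic: the oscillation term carries the full scaling weight, and only a local estimate coming from the equation itself (not the global a priori regularity) can close the loop.
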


\begin{proof}
First we concentrate on the local regularity result.
  We show that there exist $C>0$ such that
  for any $t_0 \in (0,T)$ and $h\in(0,t_0)$
  \begin{equation}	\label{preG}
\left( \fint_{t_0-h/2}^{t_0} \norm{\bu(t)}{V_p}{p}\,dt \right)^\frac1p
\le
C \left( 1 + \left( \fint_{t_0-h}^{t_0} \norm{\bu(t)}{V_p}{p-1}\,dt
	\right)^\frac{1}{p-1}
	+ \left( \fint_{t_0-h}^{t_0} \norm{\beh(t)}{V_p'}{\frac{p}{p-1}}\,dt \right)^\frac1{p}
\right).
\end{equation}
The conclusion then follows by an application of a variant of Gehring's lemma with increasing support, see e.g.  \cite[Proposition~V.1.1]{Gia83}. Here, the support grows only on one side of the interval $(t_0-h/2,t_0)$, yet the situation
can easily be accommodated according to \cite[Proposition~1.3]{GiaStru82}.

We fix $\bU\in V_p$
and test the equation \eqref{wf1} by $\bu(t) - \bU$ to obtain (using $\dual{K_0(\bu(t))}{\bu(t)}=0$)
\[
\frac12 \ddt \norm{\bu(t) - \bU}{2}{2}
+ \dual{N(\bu(t))}{\bu(t)} = \dual{K_0(\bu(t)) + N(\bu(t))}{\bU}
+ \dual{\beh(t)}{\bu(t) - \bU}.
\]
In the standard way we estimate (using the assumptions \eqref{i5-aS}--\eqref{A2})
\begin{align*}
\ddt \norm{\bu(t) - \bU}{2}{2} + \alpha \norm{\bu(t)}{V_p}{p}
\le C \Big( 1 +& \norm{\bu(t)}{2p'}{2} \norm{\bU}{V_p}{}
	+ \norm{\bu(t)}{V_p}{p-1} \norm{\bU}{V_p}{}
	\\
	&+ \norm{\beh(t)}{V_p'}{}( \norm{\bu(t)}{V_p}{} + \norm{\bU}{V_p}{} )
	\Big).
\end{align*}
 Here and in what follows, $C>0$ and $\alpha>0$ are generic constants that may change from line to line and  depend only on the data of the equation.

Invoking now the interpolation \eqref{ip0}, a~priori estimate \eqref{rw1} and Young's inequality, we proceed to
\begin{align*}
\norm{\bu(t)}{2p'}{2} \norm{\bU}{V_p}{} &\le
	c \norm{\bu(t)}{2}{\frac{2(5p-9)}{5p-6}}
		\norm{\bu(t)}{V_p}{\frac{6}{5p-6}}
		\norm{\bU}{V_p}{}
		\le C \norm{\bu(t)}{V_p}{\frac{6}{5p-6}} \norm{\bU}{V_p}{}
		\\
	&\le \epsilon \norm{\bu(t)}{V_p}{p}
		+ C_\epsilon \big( \norm{\bU}{V_p}{p} + 1\big)
\end{align*}
as by $p\ge 11/5$ we just have $6/[p(5p-6)] + 1/p \le 1$. Thus, we
arrive at
\begin{equation}	\label{start2}
 \ddt \norm{\bu(t) - \bU}{2}{2}
+ \alpha \norm{\bu(t)}{V_p}{p} \le C \big(
	1 + \norm{\bU}{V_p}{p} + \norm{\beh(t)}{V_p'}{p'}
		\big),
\end{equation}
which is the basis for a further investigation.

If $t_0 \in (0,T)$ and $h\in(0,t_0)$ we set
\[
	\bU=\buh := \fint_{t_0-h}^{t_0} \bu(t)\,dt.
\]

We multiply \eqref{start2} by $\xi(t) = (t - (t_0-h))/h^2$,  integrate over
$t \in (t_0-h,t_0)$ and after a simple manipulation and using the fact that $\xi(t_0-h)=0$, we  obtain
\begin{equation}	\label{geh0}
\fint_{t_0 - h/2}^{t_0}
	\norm{\bu(t)}{V_p}{p}\, dt
	\le C \left( 1 + \norm{\buh}{V_p}{p}+ \finthn  \frac{\norm{\bu(t) - \buh}{2}{2}}{h} + \norm{\beh(t)}{V_p'}{p'}\, dt
	\right).
\end{equation}
Observing that
\begin{equation*}%	\label{rht}
	\norm{\buh}{V_p}{p} \le \left( \finthn \norm{\bu(t)}{V_p}{}\, dt
				\right)^{p}
\end{equation*}
it only remains to treat the second term on the right hand side of \eqref{geh0} to obtain \eqref{preG}. Towards this end,
note first that the identity
\begin{equation*}%	\label{par1}
\bu(t) - \buh = \frac1h \inthn u(t) - u(s)\,ds
	= \frac1h \inthn \int_{s}^{t} \frac{d}{d\tau} \bu(\tau)\,d\tau \,ds
\end{equation*}
holds in $V_p'$. On the other hand $\bu(t) - \buh \in V_p$ for almost all $t$ and therefore
\begin{equation*}%	\label{dual}
\norm{\bu(t) - \buh}{2}{2} = \dual{\bu(t) - \buh}{\bu(t) - \buh}.
\end{equation*}
Consequently, we have
\begin{equation*}
\finthn  \frac{\norm{\bu(t) - \buh}{2}{2}}{h} \, dt
= \frac1{h^3} \inthn \inthn \int_{s}^{t}
	\dual{\frac{d}{d\tau} \bu(\tau)}{\bu(t)-\buh}\,d\tau \, ds \, dt\,.
\end{equation*}
Invoking now this equation together with Lemma~\ref{lm:dual1}, we estimate the term on the right hand side further as
\begin{align*}
\finthn  \frac{\norm{\bu(t) - \buh}{2}{2}}{h} \, dt &\le \frac{c}{h^2} \inthn \inthn
		\big( 1 + \norm{\bu(\tau)}{V_p}{p-1} \big)
		\big( \norm{\bu(t)}{V_p}{} + \norm{\buh}{V_p}{} \big)
		\,d\tau \,dt
\\
	&= c \big( 1 + \finthn \norm{\bu(\tau)}{V_p}{p-1} \,d\tau\big)
		\finthn  \big(
	\norm{\bu(t)}{V_p}{} + \norm{\buh}{V_p}{} \big) \,dt
\\
	&\le C \left( 1 +
	\left( \finthn \norm{\bu(\tau)}{V_p}{p-1} \, d\tau
		\right)^{\frac{p}{p-1}}
	+ \left(  \finthn \norm{\bu(t)}{V_p}{} \, d t
                                \right)^{p}
		\right),
\end{align*}
where we used the Young inequality for the last estimate. We see that \eqref{preG} holds for $t_0 \in (0,T)$ and $h\in(0,t_0)$ and the local regularity result follows by Gehring's lemma.

To prove the global improvement of regularity we extend $\bu$ by $0$ to $t<0$ and show that if $\bu_0\in V_p$ the inequality \eqref{preG} holds for any $t_0 <T$ and $h> 0$. The situation $t_0\in(0,T)$, $h\in(0,t_0)$ was treated in the previous part of the proof.
Now we consider $t_0\in(0,T)$, $h>t_0$. We set $\bU=\bu_0$ in \eqref{start2} to get for $t\in(0,T)$
\begin{equation*}
  \ddt \norm{\bu(t) - \bu_0}{2}{2}+ \alpha \norm{\bu(t)}{V_p}{p}
  \le C \big(1 + \norm{\bu_0}{V_p}{p} + \norm{\beh(t)}{V_p'}{p'}\big)
  \le C\big(1 + \norm{\beh(t)}{V_p'}{p'}\big).
\end{equation*}
Integrating this inequality from $0$ to $t_0$ we get
\begin{equation*}	
 \int_0^{t_0}\norm{\bu(t)}{V_p}{p}d t
  \le C\big(t_0 + \int_{0}^{t_0}\norm{\beh(t)}{V_p'}{p'}d t\big).
\end{equation*}

Further we compute
\begin{equation*}
\begin{split}%\label{konec1}
  \fint_{t_0-h/2}^{t_0}\|\bu(t)\|_{V_p}^p\,dt
  \le \frac Ch\int_{0}^{t_0}\|\bu(t)\|_{V_p}^p\,dt
  &\le \frac{C}{h}(t_0+\int_{0}^{t_0}\|\beh(t)\|^{p'}_{V_p^*}\,dt)\\
  &\le C(1+\fint_{t_0-h}^{t_0}\|\beh(t)\|^{p'}_{V_p^*}\,dt).
  \end{split}
\end{equation*}

Since estimate \eqref{preG} clearly holds also if $t_0<0$ we finally get that under the assumption $u_0\in V_p$ the inequality \eqref{preG} holds for any $t_0<T$, $h>0$. Consequently, we get the global improvement of regularity of $\bu$ by Gehring's lemma.
\end{proof}

\section{Proof of the main theorem}	\label{s:PRF}

%%%%%%%%%%%%%%%%%%%%%%%%%%%%%%%%%%%%%%%%%%%%%%%%%%%%%%%%%%%%
%%%%%%%%%%%%%%%%%%%%%%%%%%%%%%%%%%%%%%%%%%%%%%%%%%%%%%%%%%%%

This section is devoted to the proof of what we formulate as the
main result: Theorem~\ref{th:main}. It seems convenient to split
the idea into two auxiliary lemmas.

In Lemma~\ref{lm:key1}, we show that the
Ladyzhenskaya fluid  -- without the convective term -- reflects
the time regularity of the right-hand side in the class of Nikolskii
spaces, provided the initial time regularity condition \eqref{ict0}
holds.  This can be seen as a
generalization of a well-known fact that the
$L^{\infty}(0,T;G) \cap L^p(0,T;V_p)$
norm of the solution is estimated by the $L^{p'}(0,T;V_p')$ norm
of the right-hand side and the $L^2$ norm of initial condition $\bu_0$.

\par
Lemma~\ref{lm:key2} then focuses on the convective
term $K_0(\bu)$. It shows that if
$\bu \in N^{\tau,\infty}(t_0,T;G) \cap N^{\sigma,p}(t_0,T;V_p)$,
then $K_0(\bu) \in N^{\delta,p'}(t_0,T;V_p)$ for suitable
$\delta>\sigma$ depending on $\tau$ and $\sigma$, provided the initial
time regularity at $t=t_0$ is satisfied, cf. \eqref{ict0}.
This generalizes another well-known
fact, namely that $K_0(\cdot)$ is bounded from
$L^{\infty}(0,T;G)\cap L^p(0,T;V_p)$ into its dual if $p\ge 11/5$.

\begin{lemma} \label{lm:key1}
Let $t_0\in[0,T)$, $\delta\in(0,1)$ and let $\bu \in L^p(t_0,T;V_p)$ satisfy
\begin{equation}\label{eq:key1}
\ddt \bu + N(\bu) =H \qquad \textrm{in }V_p'
\end{equation}
almost everywhere in $(t_0, T)$, where $H\in N^{\delta,p'}(t_0,T;V_p')$. Let us define
\begin{equation*}%	\label{itr1}
    \tau = \frac{ \delta p }{2(p-1)}
    \qquad
    \sigma = \frac{\delta}{p-1}
\end{equation*}
and assume that $h_0\in(0,T-t_0)$ satisfies
\begin{equation}	\label{ict0}
\norm{\bu(t_0+h) - \bu(t_0)}{2}{2} \le c_1 h^{2\tau}
\qquad \textrm{for $h\in(0,h_0)$.}
\end{equation}
Then $\bu \in N^{\tau,\infty}(t_0,T;G) \cap
N^{\sigma,p}(t_0,T;V_p)$.
\end{lemma}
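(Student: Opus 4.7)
The plan is to derive an equation for the time-difference $d^h \bu$ and test it against itself. Since $\bu \in L^p(t_0,T;V_p)$ combined with \eqref{A2} gives $N(\bu) \in L^{p'}(t_0,T;V_p')$, and $H \in N^{\delta,p'} \hookrightarrow L^{p'}$, the equation \eqref{eq:key1} implies $\partial_t \bu \in L^{p'}(t_0,T;V_p')$. Fixing $h \in (0,T-t_0)$ and subtracting \eqref{eq:key1} at times $t+h$ and $t$ gives on $(t_0,T-h)$ the identity
\begin{equation*}
\partial_t (d^h \bu) + \bigl(N(\bu(\cdot+h))-N(\bu)\bigr) = d^h H \quad\text{in } V_p'.
\end{equation*}
Because $d^h \bu \in L^p(t_0,T-h;V_p)$ and $\partial_t d^h \bu \in L^{p'}(t_0,T-h;V_p')$, the standard chain rule gives $\ddt \norm{d^h \bu}{2}{2} = 2\langle \partial_t d^h \bu, d^h \bu\rangle$. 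The monotonicity \eqref{i5-aS} together with Korn's inequality bounds the nonlinear term from below by $c(\norm{d^h \bu}{V_2}{2} + \norm{d^h \bu}{V_p}{p})$, while duality and Young's inequality yield $|\langle d^h H, d^h \bu\rangle| \le \epsilon\norm{d^h \bu}{V_p}{p} + C_\epsilon \norm{d^h H}{V_p'}{p'}$.

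Next I integrate the resulting differential inequality from $t_0$ to an arbitrary $t \in (t_0, T-h)$:
\begin{equation*}
\norm{d^h \bu(t)}{2}{2} + c\int_{t_0}^{t}\bigl(\norm{d^h \bu}{V_2}{2}+\norm{d^h \bu}{V_p}{p}\bigr)\,ds \le \norm{d^h \bu(t_0)}{2}{2} + C\int_{t_0}^{T-h}\norm{d^h H}{V_p'}{p'}\,ds.
\end{equation*}
For $h \in (0,h_0)$, the initial-time assumption \eqref{ict0} bounds the first term on the right by $c_1 h^{2\tau}$, while the hypothesis $H \in N^{\delta,p'}(t_0,T;V_p')$ bounds the integral by $Ch^{\delta p'}$. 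The exponents $\tau$ and $\sigma$ are chosen precisely so that
\begin{equation*}
2\tau \;=\; \frac{\delta p}{p-1} \;=\; \delta p' \;=\; p\sigma,
\end{equation*}
so the entire right-hand side is of order $h^{2\tau}$. Taking the supremum in $t$ gives $\norm{d^h \bu}{L^\infty(t_0,T-h;G)}{2} \le Ch^{2\tau}$, and keeping the integrated term gives $\norm{d^h \bu}{L^p(t_0,T-h;V_p)}{p} \le Ch^{p\sigma}$, which are exactly the required Nikolskii seminorm bounds.

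It remains to supply the base norms and the case $h \ge h_0$. For the former, testing \eqref{eq:key1} with $\bu$ itself and using $\langle H,\bu\rangle \le \epsilon\norm{\bu}{V_p}{p} + C_\epsilon \norm{H}{V_p'}{p'}$ gives after integration from $t_0$ (where $\bu(t_0)$ makes sense thanks to $\bu \in C([t_0,T];L^2)$) the bound $\bu \in L^\infty(t_0,T;G)$. For $h \ge h_0$, the trivial estimates $\norm{d^h \bu}{L^p(I_h;V_p)}{} \le 2\norm{\bu}{L^p(t_0,T;V_p)}{}$ and $\norm{d^h \bu}{L^\infty(I_h;G)}{} \le 2\norm{\bu}{L^\infty(t_0,T;G)}{}$, together with $h^{-\tau},h^{-\sigma} \le h_0^{-\max(\tau,\sigma)}$, finish the argument. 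The conceptual heart of the proof is the exponent balance $2\tau = p\sigma = \delta p'$: this is precisely the scaling under which the initial-time regularity matches the dissipation produced on both sides of the energy identity, and it is what enables the iterative improvement exploited in the proof of the main theorem.
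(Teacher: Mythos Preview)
Your proof is correct and follows essentially the same approach as the paper: apply $d^h$ to the equation, test with $d^h\bu$, use the $p$-ellipticity \eqref{i5-aS} on the left and Young's inequality on the right, then integrate and invoke \eqref{ict0} together with $H\in N^{\delta,p'}$ to get the $h^{2\tau}=h^{\delta p'}=h^{p\sigma}$ bound. Your write-up is in fact slightly more careful than the paper's, since you explicitly justify the admissibility of the test function, treat the base $L^\infty G$ norm, and dispose of the range $h\ge h_0$.
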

\begin{remark}
  Later we will always assume that $\sigma<1/2$ (and $\tau<1/p$).
\end{remark}

\begin{proof} Applying $\dh$ to \eqref{eq:key1} and testing the result  by $\dh
\bu$, one obtains
\[
\frac12 \ddt \norm{\dh \bu}{2}{2}
+ \dual{\dh N(\bu)}{\dh \bu} = \dual{\dh H(t)}{\dh \bu}.
\]
Here
\[
\dual{\dh N(\bu)}{\dh \bu} \ge c \big( \norm{\dh \bu}{V_p}{p} +
\norm{\dh \bu}{V_2}{2} \big)
\]
in view of the $p$-ellipticity of $N$, i.e., \eqref{i5-aS}. Further, with the help of the Young inequality, we deduce that
\[
\dual{\dh H(t)}{\dh \bu}
\le \norm{\dh H(t)}{V_p'}{} \norm{\dh \bu}{V_p}{}
\le \frac{c}2 \norm{\dh \bu}{V_p}{p} + C \norm{\dh
H(t)}{V_p'}{p'}
\]
and finally
\begin{equation*}% \label{conc:k1}
\sup_{t_0 \le t \le T-h} \norm{\dh \bu(t)}{2}{2}
+ c \int_{t_0}^{T-h} \big( \norm{\dh \bu}{V_p}{p} +
\norm{\dh \bu}{V_2}{2} \big) \,dt
\le c_1 h^{2\tau} + C \int_{t_0}^{T-h}  \norm{\dh H(t)}{V_p'}{p'} \,dt.
\end{equation*}
The last term is estimated by $c h^{\delta p'}=Ch^{2\tau}$ and the
conclusion follows.
\end{proof}

Since the embedding theorem for Nikolskii spaces is not sharp
(cf.\ \eqref{nimb2}), we will repeatedly write $a+\epsilon$
or $a-\epsilon$ for some number strictly larger or smaller than $a$,
respectively; the value $\epsilon>0$ will be arbitrarily small and
its values can change from line to line.
Hence we have $N^{s,p}(0,T) \subset L^{q-\epsilon}(0,T)$, where $1/q = 1/p - s$ whenever $s<1/p$.

\begin{lemma}	\label{lm:key2}
Let $p \ge 11/5$ and  $\bu \in N^{\frac12, 2}(t_0,T; G)\cap N^{\tau,\infty}(t_0,T;G) \cap
N^{\sigma,p}(t_0,T;V_p)$ with some $\tau \in [0,1/2]$ and $\sigma \in [0,1/p]$.
Then $K_0(\bu) \in
N^{\delta-\epsilon,p'}(t_0,T;V_p')$, where
\begin{equation}	\label{conc:k2}
\delta =\left\{
\begin{aligned}
    &\frac{5p-11}{5p-6} + \frac{6\sigma}{5p-6}+\frac{\tau(-5p+13-6\sigma)}{5p-6} &&\textrm{if } 5p-13+6\sigma< 0,\\
&\frac{5p-9}{2(5p-6)}+\frac{3\sigma}{5p-6}, &&\textrm{if } 5p-13+6\sigma\ge 0.
\end{aligned}
\right.
\end{equation}

If $\tau=\sigma=0$, then $K_0(\bu) \in N^{\delta,p'}(t_0,T;V_p')$ with $\delta=\frac{5p-11}{5p-6}$ precisely.
\end{lemma}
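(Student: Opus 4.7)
The plan is to bound $\|\dh K_0(\bu)\|_{L^{p'}(t_0,T-h;V_p')}$ by combining a pointwise-in-$\Omega$ Hölder estimate, the Gagliardo--Nirenberg interpolation \eqref{ip0}, and a trilinear Hölder inequality in time that exploits all three Nikolskii regularity hypotheses on $\bu$, together with the Nikolskii--Sobolev embedding $N^{\sigma,p}(V_p)\hookrightarrow L^{p_\sigma-\epsilon}(V_p)$, where $p_\sigma:=p/(1-\sigma p)$, which relaxes the naive $L^p$-integrability in time of $\bu$.

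Starting from the algebraic identity $\dh(\bu\otimes\bu)=\bu(\cdot+h)\otimes\dh\bu+\dh\bu\otimes\bu$, Hölder on $\Omega$ with exponents $(2p',2p',p)$ yields the pointwise-in-$t$ bound
\[
\|\dh K_0(\bu)(t)\|_{V_p'} \le C\,\|\dh\bu(t)\|_{L^{2p'}}\bigl(\|\bu(t)\|_{L^{2p'}}+\|\bu(t+h)\|_{L^{2p'}}\bigr).
\]
Applying \eqref{ip0} with $a=3/(5p-6)$ to each factor and absorbing the essentially bounded $\|\bu(t)\|_2$, this reduces to
\[
\|\dh K_0(\bu)(t)\|_{V_p'} \le C\,\|\dh\bu\|_2^{1-a}\|\dh\bu\|_{V_p}^a\|\bu\|_{V_p}^a.
\]
Raising to the $p'$-th power, integrating in $t$, and applying a trilinear Hölder with exponents $(r_1,r_2,r_3)$ satisfying $1/r_1+1/r_2+1/r_3=1$, the factor involving $\|\dh\bu\|_{V_p}$ contributes $Ch^{\sigma a p'}$ for any $r_2\le p/(ap')$ via $\bu\in N^{\sigma,p}(V_p)$, and the factor involving $\|\bu\|_{V_p}$ is bounded by a constant for any $r_3\le p_\sigma/(ap')$ (this is where the Nikolskii--Sobolev embedding enters, at the cost of the arbitrarily small $\epsilon$-loss appearing in the conclusion $N^{\delta-\epsilon,p'}$). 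The remaining factor $\int\|\dh\bu\|_2^s$ with $s:=(1-a)p'r_1$ is handled by Lyapunov's interpolation between $\|\dh\bu\|_2\le Ch^\tau$ (from $N^{\tau,\infty}(G)$) and $\int\|\dh\bu\|_2^2\le Ch$ (from $N^{1/2,2}(G)$): we get $\int\|\dh\bu\|_2^s\le Ch^{1+\tau(s-2)}$ when $s\ge 2$, and otherwise $\int\|\dh\bu\|_2^s\le Ch^{s/2}$ by direct interpolation with $L^2$.

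Taking the extremal exponents $r_2=p/(ap')$, $r_3=p_\sigma/(ap')$, $r_1=p/(p-ap'(2-\sigma p))$, an algebraic computation shows that $s\ge 2$ is equivalent to $5p-13+6\sigma\le 0$, matching the case split in the statement. Collecting the powers of $h$ then yields the two displayed formulas for $\delta$. When $\tau=\sigma=0$ the Nikolskii--Sobolev embedding is superfluous (since $p_\sigma=p$ already suffices), no $\epsilon$-loss is incurred, and one recovers $\delta=(5p-11)/(5p-6)$ precisely. The principal technical obstacle is the bookkeeping of the exponents, and above all the realization that $r_3$ must be pushed above $p/(ap')$ via the Nikolskii--Sobolev embedding in order to produce the sharp coefficient $6\sigma(1-\tau)$ in the first case; with the naive choice $r_3=p/(ap')$ one would only obtain the coefficient $3\sigma$, which would be insufficient for the subsequent iteration in the proof of Theorem~\ref{th:main}.
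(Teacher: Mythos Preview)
Your proof is correct and follows essentially the same approach as the paper. The paper works with the duality pairing $\int_{t_0}^{T_h}\dual{\dh K_0(\bu)}{\psi}\,dt$ and applies H\"older in time with five exponents $(\widetilde P,\,p/a,\,\infty,\,p_\sigma/a,\,p)$ (the extra two accounting for the test function $\psi$ and the $L^\infty(G)$ factor), whereas you raise to the $p'$-th power first and use a trilinear H\"older with $(r_1,r_2,r_3)$; the two are equivalent via $\widetilde P=p'r_1$, and the case split, the interpolation of $\int\|\dh\bu\|_2^s$ between $N^{1/2,2}(G)$ and $N^{\tau,\infty}(G)$, and the use of the Nikolskii--Sobolev embedding $N^{\sigma,p}\hookrightarrow L^{p_\sigma-\epsilon}$ all match exactly.
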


\begin{proof}
Let $\psi \in L^p(t_0,T;V_p)$ with $\norm{\psi}{}{} \le 1$, $h\in (0,T-t_0)$.
We set $T_h=T-h$ and estimate
\begin{align*}
\left| \int_{t_0}^{T_h}\dual{\dh K_0(\bu) }{\psi}\, dt\right|
\le \int_{t_0}^{T_h}\int_{\Omega} |\dh (\bu\otimes \bu)|  |\nabla \psi|
\, dx\,dt
\le 2\int_{t_0}^{T_h} \norm{\dh \bu}{2p'}{} \norm{\bu}{2p'}{}
\norm{\nabla \psi}{p}{} \, dt.
\end{align*}
We use the interpolation \eqref{ip0} (and keep value for $a$ from this)
to further estimate
\begin{equation}
\begin{aligned} \label{est-K-1}
&\le c \int_{t_0}^{T_h} \norm{\dh \bu}{2}{1-a} \norm{\dh \bu}{V_p}{a}
\norm{\bu}{2}{1-a} \norm{\bu}{V_p}{a} \norm{\psi}{V_p}{} \,dt\\
&\le C \left( \int_{t_0}^{T_h} \norm{\dh \bu}{2}{\widetilde{P}(1-a)}\,dt
\right)^{\frac{1}{\widetilde{P}}}
\left( \int_{t_0}^T \norm{\dh \bu}{V_p}{p}\,dt
\right)^{\frac{a}{p}}\|\bu\|_{L^{\infty}(0,T;G)}^{1-a}\left( \int_{t_0}^T \norm{\bu}{V_p}{p_{\sigma}}\,dt
\right)^{\frac{a}{p_{\sigma}}},
\end{aligned}
\end{equation}
where we used  H\"older's inequality
with the exponents $\widetilde{P}$, $p/a$, $\infty$, $p_{\sigma} /a$
and $p$. Here, $p_{\sigma}$ is such that $N^{\sigma,p} \subset
L^{p_{\sigma}}$, i.e., it is given by
\begin{align*}
\frac{1}{p_{\sigma}} &= \frac1p - \sigma+\epsilon
\end{align*}
with an arbitrary small $\epsilon>0$ for $\sigma >0$ and $\epsilon=0$ if $\sigma=0$. The number  $\widetilde{P}$ is computed from the H\"older's
condition, hence
\begin{equation*}
\frac{1}{\widetilde{P}} = 1-\frac{1}{p}-\frac{a}{p_{\sigma}}-\frac{a}{p}.
\end{equation*}
Inserting the value of $a$ from \eqref{ip0} we get
\begin{align*}
\frac{1}{\widetilde{P}} &= \frac{5p - 11}{5p-6} + (\sigma-\epsilon) a.%\label{47}
\end{align*}
Hence, using the assumptions on $\bu$, we can continue in estimating of \eqref{est-K-1} as
\begin{align} \label{est-K0}
&\le C \left( \int_{t_0}^{T_h} \norm{\dh \bu}{2}{\widetilde{P}(1-a)}\,dt
\right)^{\frac{1}{\widetilde{P}}}
\left( \int_{t_0}^T \norm{\dh \bu}{V_p}{p}\,dt
\right)^{\frac{a}{p}}.
\end{align}
Finally, we distinguish two cases. If
$$
\widetilde{P}(1-a)= \frac{5p-9}{5p - 11 + 3(\sigma-\epsilon) }\le 2 \quad \Longleftrightarrow \quad \frac{-5p+13}{6}\le \sigma -\epsilon
$$
we use the H\"{o}lder inequality on the first term to obtain
%
%If $p\ge 12/5$, we simply
%set $\sigma=0$ (and consequently $\epsilon=0$), hence $\widetilde{P} = \frac{5p-6}{5p-11}$. Note
%now that
%$$
%\widetilde{P}(1-a) = \frac{5p-6}{5p-11}(1-\frac{3}{5p-6})=\frac{5p-9}{5p-11} \le 2
%$$, and we finish the
%estimate as
\begin{align*}
\le C \left( \int_{t_0}^{T_h} \norm{\dh \bu}{2}{2}\,dt
	\right)^\frac{(1-a)}{2} \left( \int_{t_0}^T \norm{\dh \bu}{V_p}{p}\,dt
\right)^{\frac{a}{p}}
\le c h^{\frac{(1-a)}{2}+a\sigma},
\end{align*}
which, after using the definition of $a$, leads to the second part of \eqref{conc:k2}.
%hence the second conclusion in \eqref{conc:k2}. If, on the other
%hand $p<12/5$, we always have $\widetilde{P}(1-a) > 2$,

In case that $\widetilde{P}(1-a)>2$, we interpolate the first term in \eqref{est-K0} into $L^2(0,T)$ and $L^{\infty}(0,T)$, which gives
\begin{align*}
\le C
\left( \int_{t_0}^{T_h} \norm{\dh \bu}{2}{2}\,dt
        \right)^\frac1{\widetilde{P}}
\left(  \int_{t_0}^{T_h} \norm{\dh \bu}{V_p}{p}\,dt
\right)^\frac{a}{p} \norm{\dh \bu}{L^{\infty}(t_0,T-h;G)}{b}\le c h^{ \frac{1}{\widetilde{P}} + a\sigma + b\tau},
\end{align*}
where for the second inequality we used the assumption on $\bu$ and defined
\begin{equation*}%	\label{bee}
    b :=  1-a - \frac{2}{\widetilde{P}}=\frac{-5p+13-6(\sigma-\epsilon)}{5p-6}.
\end{equation*}
This then clearly gives the first part of \eqref{conc:k2}.
\end{proof}

\begin{proof}[Proof of Theorem~\ref{th:main}]
In case $p\ge 5/2$, then $\puniq \le p$ and there is nothing to prove. Hence, we will consider only the case $11/5 \le p < 5/2$ and prove that under the assumptions of Theorem~\ref{th:main} we have $\bu \in L^{\puniq}(t_0,T;V_p)$. By the
embedding properties of Nikolskii spaces, it is enough to show
that $\bu \in N^{\suniq+\epsilon,p}(t_0,T;V_p)$, where
$$
\suniq = \frac{5}{2p}-1.
$$
To show this property, we use Lemma~\ref{lm:key1}. Hence, we need to check that (note that $\duniq=\suniq(p-1)$ is defined in \eqref{duniq})
\begin{equation}\label{needcheck}	
K_0(\bu) + \beh \in N^{\duniq+\epsilon,p'}(t_0,T;V_p')
\end{equation}
and that \eqref{ict0} holds true with
\begin{equation}
\tau> \frac{p\suniq}{2}=\frac{p' \duniq}{2}. \label{tuniq}
\end{equation}
Note that it is the assumption of Theorem~\ref{th:main} that $\beh \in N^{\duniq+\epsilon,p'}(t_0,T;V_p')$, so the second part of \eqref{needcheck}. Using the same assumption and combining it with Lemma~\ref{lm:w2}, we also obtain the validity of \eqref{ict0} with \eqref{tuniq}. Thus, we just need to check the first part of \eqref{needcheck}, i.e., the regularity of the convective term $K_0(\bu)$.

For this purpose, we use iteratively Lemmata~\ref{lm:key1}~and~\ref{lm:key2}. Notice that since we always will have that $\sigma$ appearing in Lemma~\ref{lm:key1} fulfills $\sigma \le \suniq$ then
$$
5p-13+6\sigma \le 5p-19+15/p <0
$$
for all $p\in [11/5,5/2)$. Hence we shall always use the first line in \eqref{conc:k2}.

We  distinguish
two cases.
\par
1. In case of $p\in (11/5,5/2)$ we will use an
iterative scheme. If $\bu \in N^{\sigma,p}(t_0,T;V_p)$, we improve regularity of the convective term by Lemma~\ref{lm:key2} and then use Lemma~\ref{lm:key1} to improve the regularity of $\bu$. More specifically,  we obtain $\bu \in
N^{\tilde{\sigma},p'}(t_0,T;V_p)$, where
\begin{equation}	\label{tsigma}
\tilde{\sigma} = \alpha + \beta \sigma,
\qquad \alpha = \frac{5p-11}{(p-1)(5p-6)},\
\beta = \frac{6}{(p-1)(5p-6)}.
\end{equation}
Note that if $\sigma=0$ we use the precise regularity of the convective term $K_0(\bu)$ from Lemma~\ref{lm:key2}. If $\sigma>0$ we can assume that also $\tau>0$ and we did  take the last term in \eqref{conc:k2},
namely the term with $\tau$, into account just to avoid the presence of $\epsilon$ in Lemma~\ref{lm:key2}. Inequality $p>11/5$ implies
that $\alpha > 0$ and $\beta\in(0,1)$.
Hence, the mapping $\sigma\to\tilde{\sigma}$ is a contraction on $[0,1]$. Banach contraction principle shows that starting from $\sigma = 0$, we can arrive arbitrarily
close to the fixed point $\sigma_{\rm max} = \alpha/(1-\beta) = 1/p$.
But obviously $\sigma_{\rm max} > \suniq$, so the proof is concluded after
finitely many steps.
\par
2. It remains to treat the critical case $p=11/5$. Observe that
now one has $\alpha=0$ and $\beta =1$ in \eqref{tsigma}, so
the previous iteration scheme no longer works. We modify the argument as
follows: by Lemma~\ref{lm:GHR}, the solution satisfies $\bu \in
L^{q}(t_0,T;V_p)$ with some $q>p$. Following now the argument of
Lemma~\ref{lm:key2}, we apply H\"older's inequality to \eqref{est-K-1}
with exponents $\tilde{P}$, $q/a$, $\infty$, $q/a$ and $p$. Since $q>p$,
one has $\tilde P < \infty$, and it follows that the convective term
$K_0(\bu)$ belongs to $N^{\delta,p'}(t_0,T;V_p')$ with some small
positive $\delta$.
\par
By Lemma~\ref{lm:key1}, the solution belongs to
$N^{\tau,\infty}(t_0,T;G)$ with some $\tau>0$.
Keeping this $\tau$, and combining now Lemmas~\ref{lm:key1} and
\ref{lm:key2}, while taking the last term \eqref{conc:k2} into
account, we obtain formula for improving $\sigma$ in the form
\begin{equation*}%	\label{t2sigma}
    \tilde{\sigma} =  \frac{\tau(1-3\sigma)}{3} +\sigma = \sigma(1-\tau)+\frac{\tau}{3}.
\end{equation*}
Again the mapping $\sigma\to\tilde{\sigma}$ is a contraction on $[0,1]$ and by iterating the procedure we can get with $\sigma$ arbitrarily close to its fixed point $1/3$. Since $1/3>3/22=\suniq$ for $p=11/5$ we reach the value $\suniq$ after finitely many iterations.

We finish the proof by final comment about $t_0$. Since $\bu\in L^p(0,T; V_p)$ initially, we may chose an arbitrary Lebesgue point of $\bu(t)$ as $t_0$, which then can be used in Lemma~\ref{lm:w2}. Since almost every $t_0$ is the Lebesgue point of $\bu(t)$, we finally get the conclusion of Theorem~\ref{th:main} for all $t_0>0$. In addition, if $\bu_0 \in V_p$, we may set $t_0:=0$ and we again get the result of~Theorem~\ref{th:main}.
\end{proof}

%%%%%%%%%%%%%%%%%%%%%%%%%%%%%%%%%%%%%%%%%%%%%%%%%%%%%%%%%%%%
%%%%%%%%%%%%%%%%%%%%%%%%%%%%%%%%%%%%%%%%%%%%%%%%%%%%%%%%%%%%

\begin{proof}[Proof of Corollary~\ref{cor:uniq}.]

Using Lemma~\ref{lm:diff}, we see that it is enough to prove that $\bu \in L^{\puniq}(t_0,T;V_p)$ for some $t_0\in (0,\tau)$.
By Theorem~\ref{th:main}, both solutions have the regularity
\eqref{p:uniq} with some suitable $t_0 \in (0,\tau)$, and
the assertion of Corollary~\ref{cor:uniq} follows
from Lemma~\ref{lm:diff} and Gronwall's lemma.
\par
In the second part of the corollary the assumptions are chosen such that we can set $t_0=0$ by Theorem~\ref{th:main}.
\end{proof}

\begin{proof}[Proof of Corollary~\ref{cor:maxr}.]
By Theorem~\ref{th:main} and Lemma~\ref{lm:w2} we find $t_0\in (0,\tau)$ such that $\bu \in L^{\puniq}(t_0,T;V_p)$ and moreover that \eqref{nik:t0} holds true.  We conclude the proof by Lemma~\ref{lm:diff} with $\bu_1 = \th \bu$ and $\bu_2 = \bu$ and Gronwall's lemma.
  \par
In the second part of the corollary the assumptions are chosen such that we can set $t_0=0$ by Theorem~\ref{th:main}.
\end{proof}

\def\ocirc#1{\ifmmode\setbox0=\hbox{$#1$}\dimen0=\ht0 \advance\dimen0
  by1pt\rlap{\hbox to\wd0{\hss\raise\dimen0
  \hbox{\hskip.2em$\scriptscriptstyle\circ$}\hss}}#1\else {\accent"17 #1}\fi}
\providecommand{\bysame}{\leavevmode\hbox to3em{\hrulefill}\thinspace}
\providecommand{\MR}{\relax\ifhmode\unskip\space\fi MR }
% \MRhref is called by the amsart/book/proc definition of \MR.
\providecommand{\MRhref}[2]{%
  \href{http://www.ams.org/mathscinet-getitem?mr=#1}{#2}
}
\providecommand{\href}[2]{#2}

%\bibliographystyle{amsplain}
%\bibliography{odkazy}

\end{document}